\newcommand{\C}{\mathbb{C}}
\newcommand{\tr}{\mathrm{tr}}
\newcommand{\g}{\mathfrak{g}}
\newcommand{\lf}{\mathfrak{l}}
\newcommand{\uu}{\mathfrak{u}}
\newcommand{\h}{\mathfrak{h}}
\newcommand{\greg}{\mathfrak{g}_{\mathrm{reg}}}
\newcommand{\sln}{\mathfrak{sl}}
\newcommand{\gsing}{\mathfrak{g}_{\mathrm{sing}}}
\numberwithin{equation}{section}
\theoremstyle{definition}
\newtheorem{thm}{Theorem}[section]
\newtheorem{lem}[thm]{Lemma}
\newtheorem{cor}[thm]{Corollary}
\newtheorem{prop}[thm]{Proposition}
\theoremstyle{definition}
\newtheorem*{thm*}{Theorem}
\begin{document}

\title[On the singularities of Mishchenko--Fomenko systems]{On the singularities of Mishchenko--Fomenko systems}

\author[Peter Crooks]{Peter Crooks}
\author[Markus R\"oser]{Markus R\"oser}
\address[Peter Crooks]{Department of Mathematics, Northeastern University, 360 Huntington Avenue, Boston, MA 02115, USA}
\email{p.crooks@northeastern.edu}
\address[Markus R\"oser]{Fachbereich Mathematik, Universit\"at Hamburg, 20146 Hamburg, Germany}
\email{markus.roeser@uni-hamburg.de}

\subjclass[1991]{17B80 (primary); 17B63, 22E46 (secondary)} 
\keywords{integrable system, Mishchenko--Fomenko subalgebra, semisimple Lie algebra}

\begin{abstract}
To each complex semisimple Lie algebra $\g$ and regular element $a\in\greg$, one associates a Mishchenko--Fomenko subalgebra $\mathcal{F}_a\subseteq\mathbb{C}[\g]$. This subalgebra amounts to a completely integrable system on the Poisson variety $\g$, and as such has a bifurcation diagram $\Sigma_a\subseteq\mathrm{Spec}(\mathcal{F}_a)$. We prove that $\Sigma_a$ has codimension one in $\mathrm{Spec}(\mathcal{F}_a)$ if $a\in\greg$ is not nilpotent, and that it has codimension one or two if $a\in\greg$ is nilpotent. In the nilpotent case, we show each of the possible codimensions to be achievable. Our results significantly sharpen existing estimates of the codimension of $\Sigma_a$.   
\end{abstract}

\maketitle

\vspace{-10pt}

{\small\tableofcontents} 

\section{Introduction}\label{Section: Introduction}
\subsection{Context and main results}
Mishchenko--Fomenko systems \cite{Mishchenko} represent a paradigm of Lie-theoretic approaches to complete integrability, and they have received considerable attention in both classical \cite{Mishchenko,Vinberg,Manakov,Bolsinov,Bolsinov91} and modern \cite{BolsinovRemarks,BolsinovOshemkov,AbeCrooks,Moreau,Molev,Panyushev,Panyushev2,CrooksRayan,CrooksRoeserDocumenta,IzosimovPhys} contexts. While such systems have many incarnations, they may be defined in the presence of a finite-dimensional, rank-$\ell$ complex semisimple Lie algebra $\mathfrak{g}$ with adjoint group $G$. The Killing form then induces a $G$-module isomorphism between $\mathfrak{g}$ and $\mathfrak{g}^*$, so that the canonical Lie--Poisson structure on $\g^*$ amounts to $\g$ being a Poisson variety. One next considers the locus of regular elements $\greg\subseteq\g$, i.e. the $G$-invariant, open, dense subvariety of all $x\in\g$ with $\ell$-dimensional centralizers $\g_x\subseteq\g$. Fixing such an element $a\in\greg$, each $f\in\mathbb{C}[\g]:=\mathrm{Sym}(\g^*)$ and $\lambda\in\mathbb{C}$ determine a polynomial $f_{\lambda,a}\in\mathbb{C}[\g]$ by the formula $$f_{\lambda,a}(x)=f(x+\lambda a),\quad x\in\mathfrak{g}.$$ The \textit{Mishchenko--Fomenko subalgebra} is then defined to be the subalgebra $\mathcal{F}_{a}\subseteq\mathbb{C}[\g]$ generated by all $f_{\lambda,a}$ with $f\in\mathbb{C}[\g]^G$ and $\lambda\in\mathbb{C}$. This is a maximal Poisson-commutative subalgebra of $\mathbb{C}[\g]$, for which one can choose $b:=\frac{1}{2}(\dim(\g)+\ell)$ homogeneous, algebraically independent generators. A more geometric interpretation is that the canonical map $$F_a:\g\longrightarrow\mathrm{Spec}(\mathcal{F}_a)\cong\mathbb{C}^b$$ forms a completely integrable system on the Poisson variety $\g$. The term \textit{Mishchenko--Fomenko system} is sometimes used in reference to $F_a$.     

At the same time, many interesting results in integrable systems theory are expressed in terms of critical points and critical values. One is thereby motivated to study the critical points and critical values of $F_a$, to which $\g_{\text{sing}}:=\mathfrak{g}\setminus\mathfrak{g}_{\text{reg}}$ turns out to be relevant. The critical points of $F_a$ are given by
$$\gsing^a:=\g_{\text{sing}}+\mathbb{C}a\subseteq\g$$ a consequence of Bolsinov's work \cite[Proposition 3.1]{Bolsinov} in the bi-Hamiltonian context. It follows that 
$$\Sigma_a:=F_a(\gsing^a)\subseteq\mathrm{Spec}(\mathcal{F}_a)$$ is the set of critical values, sometimes also known as the \textit{bifurcation diagram} of $F_a$. This object features prominently in several papers, some of which relate $\Sigma_a$ to spectral curves \cite{Brailov,Konyaev,Konyaev2,BrailovFomenko}. 

One recognizes that $\Sigma_a$ is a constructible subset of $\mathrm{Spec}(\mathcal{F}_a)$, and as such must contain an open dense subset of its closure $\overline{\Sigma_a}\subseteq\mathrm{Spec}(\mathcal{F}_a)$. It is therefore reasonable to define the dimension of $\Sigma_a$ to be that of the affine variety $\overline{\Sigma_a}$. In some sense, $\dim(\Sigma_a)$ measures the prevalence of the critical values of $F_a$. This gives context for \cite[Proposition 5.2]{CrooksRoeserDocumenta} and \cite[Remark 6.1]{CrooksRoeserDocumenta}, which we state together as follows.

\begin{thm*}\label{Theorem: Prelim} If $a\in\greg$, then $\Sigma_a$ has codimension one or two in $\mathrm{Spec}(\mathcal{F}_a)$. Each of these codimensions is achievable in examples.
\end{thm*}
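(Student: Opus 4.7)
The plan is to prove $1 \le \mathrm{codim}(\Sigma_a) \le 2$ via two complementary arguments and then realize both codimensions in explicit examples. Throughout, I shall use the standard fact (due to Kostant) that $\gsing$ has codimension three in $\g$, so that $\dim(\gsing^a) = \dim(\g) - 2$.

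For $\mathrm{codim}(\Sigma_a) \ge 1$, I would invoke generic smoothness: since $F_a$ is a dominant morphism of smooth complex algebraic varieties, a dense open $U \subseteq \mathrm{Spec}(\mathcal{F}_a)$ consists of regular values of $F_a$. By Bolsinov's identification of critical points with $\gsing^a$, such values lie outside $\Sigma_a$, forcing $\Sigma_a \subsetneq \mathrm{Spec}(\mathcal{F}_a)$ and hence $\mathrm{codim}(\Sigma_a) \geq 1$.

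For $\mathrm{codim}(\Sigma_a) \le 2$, fix a generic regular semisimple adjoint orbit $\mathcal{O} \subseteq \g$. The restriction $F_a|_{\mathcal{O}}$ is a Lagrangian fibration with fibers of dimension $\tfrac{1}{2}(\dim(\g) - \ell)$. The critical set $\gsing^a \cap \mathcal{O}$ of this fibration has dimension $\dim(\g) - \ell - 2$ by a transversality count in $\g$, and is invariant under the commuting Hamiltonian flows of the integrals, hence is a union of full Lagrangian fibers. Consequently, $F_a(\gsing^a \cap \mathcal{O})$ has dimension $\dim(\g) - \ell - 2 - \tfrac{1}{2}(\dim(\g) - \ell) = b - \ell - 2$. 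As $\mathcal{O}$ varies over the $\ell$-dimensional family of regular semisimple orbits (parameterized by the adjoint quotient $\chi \colon \g \to \mathbb{C}^{\ell}$), these images lie in distinct $\chi$-level sets and so contribute an additional $\ell$ dimensions, yielding $\dim(\overline{\Sigma_a}) \ge b - 2$. Both codimensions are realized for $\g = \sln_2$: for the regular semisimple $a = \mathrm{diag}(1,-1)$, one computes $\Sigma_a = \{(2\lambda^2, 4\lambda) : \lambda \in \mathbb{C}\}$ of codimension one in $\mathbb{C}^2$, while for the regular nilpotent $a = e_{12}$, the vanishing $\tr(a^2) = 0$ forces $F_a(\lambda a) = (0,0)$ for every $\lambda$, giving $\Sigma_a = \{0\}$ of codimension two.

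The principal technical obstacle is in the upper bound: one must justify both that $\gsing^a \cap \mathcal{O}$ has the expected dimension $\dim(\g) - \ell - 2$ (a transversality issue requiring that the codimension-three locus $\gsing$ meets the codimension-$\ell$ orbit $\mathcal{O}$ generically after shifting by $\mathbb{C}a$), and that this critical set truly fills out full Lagrangian fibers of $F_a|_{\mathcal{O}}$ rather than pieces thereof. The latter follows from invariance of the rank-drop locus under Hamiltonian flows of $\mathcal{F}_a$, since these flows span the tangent directions along the common Lagrangian leaves.
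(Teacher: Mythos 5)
Your argument for $\mathrm{codim}(\Sigma_a)\geq 1$ (generic smoothness of the dominant morphism $F_a$, combined with Bolsinov's identification of the critical locus with $\gsing^a$) is sound, and your two $\sln_2$ computations do exhibit both codimensions, consistent with what the paper cites from Section 6.1 of the earlier work. The genuine gap is in your proof of $\mathrm{codim}(\Sigma_a)\leq 2$, and it is not only the transversality issue you flag. The decisive error is the claim that $\gsing^a\cap\mathcal{O}$ is a union of \emph{full} Lagrangian fibres of $F_a\vert_{\mathcal O}$. Invariance of the rank-drop locus under the Hamiltonian flows of $\mathcal F_a$ only makes it a union of joint flow orbits; through a critical point those orbits have dimension strictly less than $u=\tfrac12(\dim(\g)-\ell)$ --- that is precisely what a drop in rank means --- so they do not sweep out $u$-dimensional fibres, and your justification that ``these flows span the tangent directions along the common Lagrangian leaves'' is exactly the statement that fails at critical points. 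Your own codimension-one example already refutes the claim: for $\g=\sln_2$ and $a=\mathrm{diag}(1,-1)$ one has $\gsing^a\cap\mathcal O=\mathbb{C}a\cap\mathcal O$, a finite set at which every Hamiltonian vector field of $\mathcal F_a$ vanishes, while the fibres of $F_a\vert_{\mathcal O}$ are curves. Moreover, if your mechanism were valid, the per-orbit count would force $\dim(\Sigma_a)=b-2$ exactly, contradicting that example and also Theorem \ref{Thm: Main Theorem}(i) of this paper, whose proof shows that for non-nilpotent $a$ the restriction of $F_a$ to $\gsing^a$ has generic fibre dimension $u-1$, not $u$.

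In addition, the assertion that a generic regular semisimple orbit meets $\gsing^a$ in a nonempty set of the expected dimension $\dim(\g)-\ell-2$ is asserted rather than proved; in the algebraic category one cannot perturb to achieve transversality, and even nonemptiness requires an argument. Note also that the present paper does not reprove the statement you were asked to prove: it simply cites Proposition 5.2 and Remark 6.1 of the earlier paper, and its new content is the sharper Theorem \ref{Thm: Main Theorem}, proved by exhibiting smooth points of $\gsing^a$ where the restricted differential has rank $b-1$. A correct route to the upper bound is to combine $\dim(\gsing^a)=\dim(\g)-2$ (using that $\gsing$ is a closed cone of codimension three and $a\in\greg$) with a bound of $u$ on the dimensions of the relevant fibres of $F_a$, which by the fibre-dimension theorem yields $\dim F_a(\gsing^a)\geq b-2$; that fibre-dimension bound is the substantive input you would need to establish or cite, and the Lagrangian-fibre argument does not substitute for it.
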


This result has an obvious shortcoming; it does not address the dependence of $\dim(\Sigma_a)$ on $a\in\greg$. With this issue in mind, our main result is the following refinement of the previous theorem. 

\begin{thm}\label{Thm: Main Theorem}
Suppose that $a\in\greg$.
\begin{itemize}
\item[(i)] If $a$ is not nilpotent, then $\Sigma_a$ has codimension one in $\mathrm{Spec}(\mathcal{F}_a)$.
\item[(ii)] If $a$ is nilpotent, then $\Sigma_a$ has codimension one or two in $\mathrm{Spec}(\mathcal{F}_a)$. Each of these codimensions is achievable in examples. 
\end{itemize}
\end{thm}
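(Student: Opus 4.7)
My plan is to compute $\dim\Sigma_a$ by a fibre-dimension argument for the restriction $F_a|_{\gsing^a}\colon\gsing^a\to\Sigma_a$. Since $\gsing$ is a well-known irreducible closed subvariety of $\g$ of codimension three and $a\in\greg$ forces $a\notin\gsing$, we get $\dim\gsing^a = \dim\g - 2$. Hence
\[
\dim\Sigma_a \;=\; \dim\g - 2 - d(a),
\]
where $d(a)$ denotes the dimension of the generic fibre of $F_a|_{\gsing^a}$. In view of the preceding theorem, which provides $\operatorname{codim}\Sigma_a\in\{1,2\}$, the main theorem reduces to distinguishing $d(a)=\tfrac{1}{2}(\dim\g-\ell)-1$ (codimension one) from $d(a)=\tfrac{1}{2}(\dim\g-\ell)$ (codimension two); geometrically, the latter occurs precisely when a generic $F_a$-fibre meeting $\gsing^a$ lies entirely in $\gsing^a$.

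\textbf{Part (i).} Suppose $a$ is non-nilpotent, with nontrivial semisimple part $a_s$. I aim to show that a generic such fibre escapes $\gsing^a$, forcing $d(a)=\tfrac{1}{2}(\dim\g-\ell)-1$ and hence codimension one. The key is the Jordan-decomposition characterisation of $\gsing$: an element $z\in\g$ lies in $\gsing$ if and only if its semisimple part $z_s$ is non-regular and its nilpotent part $z_n$ is non-regular inside the Levi $\g_{z_s}$. For $z$ in the fibre through $x\in\gsing^a$, the characteristic polynomials of $z-\lambda a$ and $x-\lambda a$ agree for every $\lambda$, pinning down the semisimple Jordan structure at each $\lambda$ but leaving the nilpotent-part Jordan type room to vary. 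The non-vanishing $a_s$ supplies a proper Levi $\g_{a_s}\subsetneq\g$, which I would use to deform the nilpotent part of $z-\lambda a$ to a regular nilpotent in its own sub-Levi, thereby producing a fibre-member in $\greg\setminus\gsing^a$.

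\textbf{Part (ii).} By part (i), every example of codimension two must have $a$ nilpotent. Explicitly, $\g=\mathfrak{sl}_2$ with $a$ the regular nilpotent yields $\gsing=\{0\}$, $\gsing^a=\C a$, and a direct computation gives $\Sigma_a=\{0\}\subset\C^2$: codimension two. For codimension one with $a$ nilpotent, my plan is to exhibit a higher-rank example verified by direct computation---for instance $\g=\mathfrak{sl}_n$ with $n$ sufficiently large and $a$ principal nilpotent---where the richer Levi structure of the pencil $\{x-\lambda a\}_\lambda$ allows a generic fibre to escape $\gsing^a$, giving $d(a)=\tfrac{1}{2}(\dim\g-\ell)-1$.

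\textbf{Main obstacle.} The principal difficulty is the deformation argument in part (i): one must construct, for a generic $z\in\gsing^a$, an element of $F_a^{-1}(F_a(z))$ outside $\gsing^a$, which amounts to altering the nilpotent part of $z-\lambda a$ for some $\lambda$ without disturbing any characteristic polynomial $\chi(z-\lambda a)$. The proper Levi $\g_{a_s}$ afforded by non-nilpotent $a$ supplies the requisite deformation room; when $a$ is principal nilpotent, this room can vanish, making the codimension-two alternative in part (ii) possible.
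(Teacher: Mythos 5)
Your proposal is a plan rather than a proof, and it has a genuine logical gap even before the missing constructions. The reduction in your opening paragraph --- that codimension two occurs ``precisely when a generic $F_a$-fibre meeting $\gsing^a$ lies entirely in $\gsing^a$'' --- is not justified, and it is the direction you actually need that fails. To conclude codimension one from your part (i) argument you would need: if some point of $F_a^{-1}(c)$ escapes $\gsing^a$ for generic $c\in\Sigma_a$, then $\dim\bigl(F_a^{-1}(c)\cap\gsing^a\bigr)\le u-1$. This does not follow: fibres of $F_a$ over critical values need not be irreducible of pure dimension $u$, so a component of dimension $\ge u$ could lie entirely inside $\gsing^a$ while another component contains regular points, leaving $d(a)=u$ and hence codimension two. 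So even a successful ``escape'' construction would not close part (i) without additional control on the structure of the critical fibres. Moreover, the escape construction itself --- the core of part (i) --- is never carried out. As you note yourself, altering ``the nilpotent part of $z-\lambda a$ for some $\lambda$'' is not a well-defined operation on $z$: any perturbation of $z$ changes $z+\lambda a$ for every $\lambda$ simultaneously, and you give no mechanism for preserving all the invariants $f_i(\cdot+\lambda a)$ while changing the Jordan type at one value of $\lambda$. (A small additional point: the characterization of $\gsing$ should be that $z$ is singular iff its nilpotent part is non-regular in the reductive centralizer $\g_{z_s}$; the extra conjunct about $z_s$ being non-regular is redundant.) In part (ii), your $\mathfrak{sl}_2$ computation is correct and matches the known codimension-two example, but the codimension-one example with $a$ nilpotent is only promised (``$\mathfrak{sl}_n$ with $n$ sufficiently large''), not produced; in fact $\mathfrak{sl}_3$ already works, but that requires an explicit verification you have not supplied.

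For comparison, the paper avoids fibre-dimension arguments entirely: it reduces to $a=\xi+y\in\xi+\h$, picks a simple root $\alpha$ with $\alpha(y)\neq 0$, and shows that any subregular semisimple $x\in\ker(\alpha)^{\circ}$ is a smooth point of $\gsing^a$ at which the differential of $F_a\big\vert_{\gsing^a}$ has rank $b-1$ (using the tangent-space description of $\gsing$ at subregular semisimple points and Panyushev's description of $(\nabla\mathcal F_a)_x$ as spanned by centralizers $\g_{x+\lambda a}$); an elementary lemma then gives $\operatorname{codim}\Sigma_a=1$. For (ii) it performs the explicit rank computation for $\g=\mathfrak{sl}_3$, $a=\xi$ regular nilpotent, at the point $x=e_{\alpha}$, and quotes the prior codimension bound for the rest. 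If you want to salvage your approach, you would need both a genuine deformation construction inside a fixed $F_a$-fibre and a separate argument ruling out $u$-dimensional components of $F_a^{-1}(c)\cap\gsing^a$; the paper's single-point rank argument sidesteps both difficulties.
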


Our manuscript is organized as follows. Section \ref{Section: Some ingredients} establishes the notation, conventions, and first results underlying the proof of Theorem \ref{Thm: Main Theorem}. Section \ref{Section: The proof} subsequently provides the proof itself, devoting \ref{Subsection: Proof i} and \ref{Subsection: Proof ii} to the proofs of (i) and (ii), respectively.  

\section{Some ingredients}\label{Section: Some ingredients}

\subsection{Lie-theoretic preliminaries}
Retain the notation and objects introduced in Section \ref{Section: Introduction}.  We will have occasion to consider the three integers $$\ell:=\mathrm{rank}(\g),\quad b:=\frac{1}{2}(\dim(\g)+\ell),\quad\text{and}\quad u:=b-\ell,$$ noting that $b$ (resp. $u$) is the dimension of any Borel subalgebra (resp. maximal nilpotent subalgebra) of $\g$.

Let $\langle\cdot,\cdot\rangle:\g\otimes_{\mathbb{C}}\g\longrightarrow\mathbb{C}$ denote the Killing form, and set $$V^{\perp}:=\{x\in\g:\langle x,v\rangle=0\text{ for all }v\in V\}$$ for each subset $V\subseteq\g$. One has a $G$-module isomorphism defined by
\begin{equation}\label{Equation: Killing isomorphism}\g\overset{\cong}\longrightarrow\g^*,\quad x\mapsto\langle x,\cdot\rangle,\quad x\in\g.\end{equation}
At the same time, observe that the differential of $f\in\mathbb{C}[\g]$ at $x\in\g$ is an element $df_x\in\g^*$. We define the \textit{gradient} of $f$ at $x$ to be the inverse image of $df_x$ under \eqref{Equation: Killing isomorphism}, i.e. the unique element $\nabla f_x\in\g$ satisfying
$$\langle\nabla f_x,y\rangle=df_x(y)$$ for all $y\in\g$. Each subalgebra $\mathcal{A}\subseteq\mathbb{C}[\g]$ and point $x\in\g$ then determine the subspace
$$\nabla\mathcal{A}_x:=\{\nabla f_x:f\in\mathcal{A}\}\subseteq\g.$$
We also note that the Poisson structure on $\g$ amounts to the Poisson bracket
$$\{f_1,f_2\}(x):=\langle x,[(\nabla f_1)_x,(\nabla f_2)_x]\rangle,\quad f_1,f_2\in\mathbb{C}[\g],\text{ }x\in\g$$
on $\mathbb{C}[\g]$. One knows that the adjoint orbit
$$Gx:=\{\mathrm{Ad}_g(x):g\in G\}\subseteq\g$$ is the symplectic leaf through $x\in\g$, where $\mathrm{Ad}:G\longrightarrow\operatorname{GL}(\g)$ is the adjoint representation. 

Consider the subalgebra $\mathcal{I}:=\mathbb{C}[\g]^G$ of $\mathrm{Ad}$-invariant elements in $\mathbb{C}[\g]$. This subalgebra admits $\ell$ homogeneous, algebraically independent generators, and the inclusion $\mathcal{I}\subseteq\mathbb{C}[\g]$ induces the adjoint quotient map
\begin{equation}\label{Equation: Adjoint quotient}\chi:\g\longrightarrow\mathrm{Spec}(\mathcal{I})\cong\mathbb{C}^{\ell}.\end{equation} One has
$$\mathrm{ker}(d\chi_x)=(\nabla \mathcal{I}_x)^{\perp}$$ for all $x\in\g$.

It will be advantageous to fix opposite Borel subalgebras $\mathfrak{b},\mathfrak{b}^{-}\subseteq\g$, regarding the former (resp. latter) as the positive (resp. negative) Borel subalgebra. These choices yield a Cartan subalgebra $\mathfrak{h}:=\mathfrak{b}\cap\mathfrak{b}^{-}\subseteq\mathfrak{g}$, as well as sets of roots $\Delta\subseteq\h^*$, positive roots $\Delta^{+}\subseteq\Delta$, negative roots $\Delta^{-}=-\Delta^{+}\subseteq\Delta$, and simple roots $\Pi\subseteq\Delta^{+}$. One also has the nilpotent radicals $\mathfrak{u}:=[\mathfrak{b},\mathfrak{b}]$ and $\mathfrak{u}^{-}:=[\mathfrak{b}^{-},\mathfrak{b}^{-}]$ of $\mathfrak{b}$ and $\mathfrak{b}^{-}$, respectively. The decompositions
$$\g=\mathfrak{u}^{-}\oplus\mathfrak{h}\oplus\mathfrak{u}^{+},\quad\mathfrak{b}=\mathfrak{h}\oplus\mathfrak{u},\quad\mathfrak{b}^{-}=\mathfrak{h}\oplus\mathfrak{u}^{-},\quad\mathfrak{u}=\bigoplus_{\alpha\in\Delta^{+}}\g_{\alpha},\quad\text{and}\quad \mathfrak{u}^{-}=\bigoplus_{\alpha\in\Delta^{-}}\g_{\alpha}$$ then necessarily hold,
where $\g_{\alpha}\subseteq\g$ is the root space for $\alpha\in\Delta$. 
 
Note that each $\alpha\in\Delta^{+}$ determines the parabolic subalgebras
$$\mathfrak{p}_{\alpha}:=\mathfrak{b}\oplus\mathfrak{g}_{-\alpha}\quad\text{and}\quad\mathfrak{p}_{\alpha}^{-}:=\mathfrak{b}^{-}\oplus\mathfrak{g}_{\alpha}.$$ These subalgebras admit Levi decompositions of
$$\mathfrak{p}_{\alpha}=\mathfrak{l}_{\alpha}\oplus\mathfrak{u}_{\alpha}\quad\text{and}\quad \mathfrak{p}_{\alpha}^{-}=\mathfrak{l}_{\alpha}\oplus\mathfrak{u}_{\alpha}^{-},$$ where $$\mathfrak{l}_{\alpha}:=\mathfrak{g}_{-\alpha}\oplus\h\oplus\mathfrak{g}_{\alpha},\quad\mathfrak{u}_{\alpha}:=\bigoplus_{\beta\in\Delta^{+}\setminus\{\alpha\}}\mathfrak{g}_{\beta},\quad\text{and}\quad \mathfrak{u}_{\alpha}^{-}:=\bigoplus_{\beta\in\Delta^{-}\setminus\{-\alpha\}}\mathfrak{g}_{\beta}.$$  

It will be advantageous to choose root vectors $e_{\alpha}\in\g_{\alpha}\setminus\{0\}$ and $e_{-\alpha}\in\mathfrak{g}_{-\alpha}\setminus\{0\}$ for each $\alpha\in\Delta^{+}$. Let us make these choices in such a way that
$h_{\alpha}:=[e_{\alpha},e_{-\alpha}]$ satisfies $\alpha(h_{\alpha})=2$ for all $\alpha\in\Delta^+$, noting that $\{e_{\alpha},h_{\alpha},e_{-\alpha}\}$ is an $\mathfrak{sl}_2$-triple and a basis of $[\mathfrak{l}_{\alpha},\mathfrak{l}_{\alpha}]$.

\subsection{A simplification and coordinatization of the Mishchenko--Fomenko system}
Consider the regular nilpotent element $$\xi:=\sum_{\alpha\in\Pi}e_{-\alpha}\in\g$$ and the affine subspace
$$\xi+\h\subseteq\g.$$

\begin{lem}
An element of $\g$ is regular if and only if it is conjugate to an element of $\xi+\h$.
\end{lem}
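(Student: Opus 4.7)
My plan is to prove the two implications separately, both leveraging a principal $\sln_2$-triple $(e_0,h_0,\xi)$ in which $h_0\in\h$ is characterized by $\alpha(h_0)=2$ for every $\alpha\in\Pi$. Under the grading $\g=\bigoplus_{k\in\mathbb{Z}}\g(k)$ by $\ad(h_0)$-eigenvalues one has $\g(0)=\h$ (since $\alpha(h_0)\neq 0$ for all $\alpha\in\Delta$) and $\xi\in\g(-2)$, and the operator $\Ad(\exp(sh_0))$ scales each $\g_{-\alpha}$, $\alpha\in\Delta^{+}$, by $e^{-s\alpha(h_0)}$ with $\alpha(h_0)=2\,\mathrm{ht}(\alpha)\geq 2$.

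For the implication that every $\xi+h$ is regular, I would exploit the contracting identity
\[
\Ad(\exp(sh_0))(\xi+h)=e^{-2s}\xi+h,
\]
which after multiplication by the scalar $e^{2s}$ yields $\xi+e^{2s}h$. Thus $\xi+h$ and $\xi+th$ lie in a common $G\times\C^{*}$-orbit for every $t\in\C^{*}$, and in particular have equal centralizer dimensions. Since $\xi$ is principal nilpotent and hence regular, openness of $\greg$ implies that $\xi+th$ is regular for all sufficiently small $t$; the constancy of centralizer dimension along the $\C^{*}$-family then forces $\xi+th$ to be regular for every $t\in\C^{*}$, in particular for $t=1$.

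For the converse, my first step would be to establish $\chi(\xi+h)=\chi(h)$ for every $h\in\h$, by proving the more general fact that $\chi|_{\bb^{-}}$ depends only on the $\h$-component. Given $x=h+u\in\h\oplus\uu^{-}$ and $f\in\C[\g]^{G}$, the same flow satisfies $\Ad(\exp(sh_0))(h+u)=h+u_s$ with $u_s\in\uu^{-}$ tending to $0$ as $s\to+\infty$ along the real axis, since every negative root has strictly negative $\ad(h_0)$-weight. $G$-invariance and continuity of $f$ then force $f(h+u)=f(h)$. Consequently $\chi|_{\xi+\h}$ coincides with the Chevalley quotient $\h\to\h/W$ under the identification $\xi+h\mapsto h$, and is in particular surjective. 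Given an arbitrary regular $x\in\g$, I would write $\chi(x)=[h']_{W}$ for some $h'\in\h$, observe that $\xi+h'\in\xi+\h$ is regular with $\chi(\xi+h')=\chi(x)$, and conclude via Kostant's theorem that regular elements with coinciding adjoint-quotient image are $G$-conjugate.

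The only substantive external ingredient is Kostant's classical theorem on the fibers of $\chi|_{\greg}$; everything else is elementary bookkeeping with the principal $\sln_2$-grading. The cleanest packaging is to separate the two applications of $\Ad(\exp(sh_0))$—one to contract $\xi$ and detect regularity, the other to contract $\uu^{-}$ and compute $\chi$ on $\bb^{-}$—since they serve quite distinct purposes despite using the same one-parameter subgroup.
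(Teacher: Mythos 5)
Your proof is correct, and its skeleton is the same as the paper's: produce an element $\xi+h'\in\xi+\h$ with $\chi(\xi+h')=\chi(x)$, then invoke Kostant's theorem that two regular elements in the same fibre of $\chi$ are $G$-conjugate. The difference is in how the ingredients are obtained. The paper cites Kostant directly for $\xi+\h\subseteq\greg$ and for $\chi(\xi+y)=\chi(y)$, and finds $h'$ by noting that each fibre of $\chi$ contains a semisimple orbit, hence meets $\h$. You instead prove both facts from scratch using the one-parameter subgroup $\exp(sh_0)$ attached to the principal grading: contracting $\xi$ (together with the scaling invariance of centralizer dimension and openness of $\greg$) reduces regularity of $\xi+h$ to regularity of the principal nilpotent $\xi$, and contracting $\uu^-$ shows that $\chi$ restricted to $\h\oplus\uu^-$ depends only on the $\h$-component; you then find $h'$ via surjectivity of the Chevalley map $\h\to\h/W$ rather than via semisimple orbits in fibres. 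Your packaging is more self-contained, importing only the regularity of the principal nilpotent and Kostant's theorem on regular fibres, at the cost of length; the paper's version is shorter because it leans on the cited lemmas of Kostant, which encapsulate exactly the computations your contraction arguments reproduce.
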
 

\begin{proof}
One has $\xi+\h\subseteq\greg$ \cite[Lemma 10]{KostantLie}, giving the backward implication. On the other hand, suppose that $x\in\greg$. The fibre $\chi^{-1}(\chi(x))$ contains a semisimple adjoint orbit \cite[Theorem 3]{KostantLie}. It follows that $\chi^{-1}(\chi(x))$ contains an element $y\in\h$, and we observe that $\chi(\xi+y)=\chi(y)=\chi(x)$ \cite[Lemma 11]{KostantLie}. Since $x$ and $\xi+y$ lie in $\greg$, this implies that $x$ and $\xi+y$ are $G$-conjugate \cite[Theorem 3]{KostantLie}.
\end{proof}
 
Now fix any $a\in\greg$ and choose $g\in G$ with $a':=\mathrm{Ad}_g(a)\in\xi+\h$. The algebra automorphism
$$\mathbb{C}[\g]\overset{\cong}\longrightarrow\mathbb{C}[\g],\quad f\mapsto f\circ\mathrm{Ad}_{g},\quad f\in\mathbb{C}[\g]$$
restricts to an algebra isomorphism from $\mathcal{F}_{a'}$ to $\mathcal{F}_a$. One thereby obtains an affine variety isomorphism $$\mathrm{Spec}(\mathcal{F}_a)\overset{\cong}\longrightarrow\mathrm{Spec}(\mathcal{F}_{a'})$$
and commutative diagram
\[\begin{tikzcd}[column sep=1.5em]
& \g \arrow{dl}[swap]{F_a} \arrow{dr}{F_{a'}} \\
\mathrm{Spec}(\mathcal{F}_a) \arrow{rr}{\cong} && \mathrm{Spec}(\mathcal{F}_{a'}).
\end{tikzcd}
\]
In particular, no generality is lost if one only considers Mishchenko--Fomenko systems associated to elements $a\in\xi+\h$. We shall reduce to this case on a regular basis. 

The study of Mishchenko--Fomenko systems may be simplified in the following additional way. Choose homogeneous, algebraically independent generators $f_1,\ldots,f_{\ell}$ of $\mathcal{I}$. These generators determine a variety isomorphism $\mathrm{Spec}(\mathcal{I})\cong\mathbb{C}^{\ell}$, under which the adjoint quotient becomes a map \begin{equation}\label{Equation: Coordinatized adjoint quotient}\chi=(f_1,\ldots,f_{\ell}):\g\longrightarrow\mathbb{C}^{\ell}.\end{equation} Now fix $a\in\greg$, and let $d_1,\ldots,d_{\ell}$ denote the homogeneous degrees of $f_1,\ldots,f_{\ell}$, respectively. Given any $i\in\{1,\ldots,\ell\}$, there exist unique polynomials $f_{i0},f_{i2},\ldots,f_{i(d_i-1)}\in\mathbb{C}[\g]$ such that
$$(f_i)_{\lambda,a}=f_i(a)\lambda^{d_i}+\sum_{j=0}^{d_i-1}\lambda^jf_{ij}$$
for all $\lambda\in\mathbb{C}$. One knows that $f_{i0}=f_i$ for all $i\in\{1,\ldots,\ell\}$, 
$\sum_{i=1}^{\ell}d_i=b$ \cite[Equation (1)]{Varadarajan}, and that the polynomials $\{f_{ij}:i\in\{1,\ldots,\ell\},\text{ }j\in\{0,\ldots,d_i-1\}\}$ are homogeneous, algebraically independent generators of $\mathcal{F}_a$ \cite[Section 3]{Panyushev}. In other words, the $f_{ij}$ with $j\geq 1$ extend $f_1,\ldots,f_{\ell}$ to homogeneous, algebraically independent generators $f_1,\ldots,f_{\ell},f_{\ell+1},\ldots,f_{b}$ of $\mathcal{F}_a$. These generators determine a variety isomorphism $\mathrm{Spec}(\mathcal{F}_a)\cong\mathbb{C}^b$, as a result of which the Mishchenko--Fomenko system $F_a:\g\longrightarrow\mathrm{Spec}(\mathcal{F}_a)$ becomes the map
\begin{equation}\label{Equation: Coordinatized MF} F_a=(f_1,\ldots,f_b):\g\longrightarrow\mathbb{C}^b.\end{equation} We call this the \textit{coordinatized Mishchenko--Fomenko system} resulting from the choice of $f_1,\ldots,f_{\ell}$ and chosen enumeration of $\{f_{ij}:j\geq 1\}$ as $f_{\ell+1},\ldots,f_b$. It is clear that the intergable systems-theoretic properties of a Mishchenko--Fomenko system coincide with those of any coordinatized version.

\subsection{Subregular semisimple elements}\label{Subsection: Subregular semisimple}
Recall that $x\in\g$ is called \textit{subregular} if its centralizer $\g_x\subseteq\g$ is $(\ell+2)$-dimensional. Given $\alpha\in\Delta$, the subregular elements of $\g$ lying in $\mathrm{ker}(\alpha)\subseteq\mathfrak{h}$ are given by
$$\mathrm{ker}(\alpha)^{\circ}:=\{x\in\mathrm{ker}(\alpha):\beta(x)\neq 0\text{ for all }\beta\in\Delta\setminus\{\alpha,-\alpha\}\}.$$ One readily verifies that a semisimple element of $\g$ is subregular if and only if it belongs to $$D(\alpha):=G\mathrm{ker}(\alpha)^{\circ}\subseteq\g$$ for some $\alpha\in\Delta$. The subset $D(\alpha)$ is an example of a \textit{decomposition class}, a notion introduced in \cite{BorhoCommentarii}. Such considerations imply that $D(\alpha)$ is a smooth \cite[Corollary 3.8.1 (i)]{BroerLectures}, locally closed subvariety \cite[Corollary 39.1.7 (ii)]{Tauvel} of $\g$, while it is also known that $D(\alpha)$ has codimension three in $\g$ \cite[Lemma 3.6 (iii)]{Popov}. The closures $\{\overline{D(\alpha)}\}_{\alpha\in\Delta}$ are precisely the irreducible components of $\gsing$ \cite[Lemma 3.6]{Popov}.    
\begin{lem}\label{Lemma: Component}
If $\alpha,\beta\in\Delta$ are such that $D(\alpha)\cap \overline{D(\beta)}\neq\emptyset$, then $D(\alpha)=D(\beta)$.
\end{lem}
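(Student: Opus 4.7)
The plan is to leverage the stratification property of decomposition classes --- namely, that for any $\gamma \in \Delta$ the closure $\overline{D(\gamma)}$ is a union of decomposition classes in $\g$. This is a standard feature of the Borho--Kraft theory \cite{BorhoCommentarii}, reflecting the fact that decomposition classes in $\g$ partition $\g$ and are compatible with closures.

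Granting this, I would pick $x \in D(\alpha) \cap \overline{D(\beta)}$. The stratification property then forces the entire decomposition class of $x$, namely $D(\alpha)$, to lie in $\overline{D(\beta)}$. Taking closures gives $\overline{D(\alpha)} \subseteq \overline{D(\beta)}$, and \cite[Lemma 3.6 (iii)]{Popov} tells us that both closures are irreducible of codimension three in $\g$; hence the inclusion is an equality.

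Now $D(\alpha)$ is locally closed in $\g$ and dense in $\overline{D(\alpha)}$. Writing $D(\alpha) = U \cap C$ with $U$ open and $C$ closed in $\g$, the chain $D(\alpha) \subseteq \overline{D(\alpha)} \subseteq C$ gives $D(\alpha) = U \cap \overline{D(\alpha)}$, so $D(\alpha)$ is open in $\overline{D(\alpha)}$, and similarly for $D(\beta)$. Both are therefore dense open subsets of the common irreducible variety $\overline{D(\alpha)} = \overline{D(\beta)}$, so their intersection is nonempty. Picking $y$ in it and writing $y = \Ad_g(y_1) = \Ad_{g'}(y_2)$ with $y_1 \in \mathrm{ker}(\alpha)^\circ$ and $y_2 \in \mathrm{ker}(\beta)^\circ$, the centralizers $\lf_\alpha$ and $\lf_\beta$ must be $G$-conjugate; inspection of root spaces shows this happens only when $\alpha$ lies in the Weyl-group orbit of $\pm \beta$, whence the definition $D(\gamma) = G\mathrm{ker}(\gamma)^\circ$, together with the realization of the Weyl group inside $G$, yields $D(\alpha) = D(\beta)$.

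The main obstacle is justifying the initial stratification property, which is standard in decomposition-class theory but not explicitly recorded in the excerpt. A strictly self-contained proof would have to either cite it from \cite{BorhoCommentarii} or argue directly --- for instance, by using the $G$- and $\mathbb{C}^*$-invariance of $\overline{D(\beta)}$ together with a transversal-dimension count on $\mathrm{ker}(\alpha)^\circ$ --- but this direct route seems appreciably more delicate than simply invoking the Borho--Kraft stratification.
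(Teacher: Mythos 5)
Your argument is essentially correct, but it follows a genuinely different route from the paper's. The paper's proof is short: by \cite[Proposition 39.2.7 (ii)]{Tauvel}, $D(\beta)$ is precisely the set of semisimple elements of $\overline{D(\beta)}$, so any point of $D(\alpha)\cap\overline{D(\beta)}$ (being semisimple) already lies in $D(\beta)$; since decomposition classes are the classes of an equivalence relation on $\g$ \cite[Section 3.3]{BroerLectures}, $D(\alpha)\cap D(\beta)\neq\emptyset$ forces $D(\alpha)=D(\beta)$. You instead invoke the Borho--Kraft stratification property (the closure of a decomposition class is a union of decomposition classes) to get $D(\alpha)\subseteq\overline{D(\beta)}$, identify the two closures using irreducibility and equal codimension (they are irreducible components of $\gsing$ by \cite{Popov}), and then recover $D(\alpha)=D(\beta)$ by intersecting the two dense open classes and comparing the centralizers $\lf_\alpha$ and $\lf_\beta$ up to conjugacy and the Weyl group. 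Both routes rest on one external input: the paper uses the ``semisimple part of the closure'' description, while you use the stratification theorem, which you correctly flag as needing a citation (it is indeed in \cite{BorhoCommentarii} and in Broer's lectures), so this is a fair trade. Two remarks. First, your concluding Weyl-group detour is more work than necessary: once you know $D(\alpha)\cap D(\beta)\neq\emptyset$ (dense open subsets of a common irreducible closure), the partition property of decomposition classes --- which you already used implicitly when declaring that ``the decomposition class of $x$'' is $D(\alpha)$ --- finishes the proof at once, exactly as in the paper. Second, ``inspection of root spaces'' compresses a real (if standard) argument: from $\Ad_g(\lf_\alpha)=\lf_\beta$ one should first conjugate the Cartan subalgebra $\Ad_g(\h)$ back to $\h$ inside $\lf_\beta$, so that the resulting group element normalizes $\h$ and induces some $w$ in the Weyl group with $w(\alpha)=\pm\beta$; only then does $D(\alpha)=D(\beta)$ follow from $D(\gamma)=G\,\mathrm{ker}(\gamma)^{\circ}$. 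With those two points tightened (or the second simply replaced by the equivalence-class argument), your proof is complete.
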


\begin{proof}
An application of \cite[Proposition 39.2.7 (ii)]{Tauvel} tells us that $D(\beta)$ is the set of semisimple elements of $\g$ that lie in $\overline{D(\beta)}$. Since $D(\alpha)$ consists of semisimple elements, we must have $D(\alpha)\cap D(\beta)\neq\emptyset$. The conclusion $D(\alpha)=D(\beta)$ now follows from the fact that decomposition classes are equivalence classes for an equivalence relation on $\g$ \cite[Section 3.3]{BroerLectures}. 
\end{proof}

\begin{prop}\label{Proposition: Subregular semisimple tangent space}
If $x\in\g$ is subregular and semisimple, then $x$ is a smooth point of $\gsing$ and \begin{equation}\label{Equation: Tangent space equation} T_x\gsing=[\g_x,\g_x]^{\perp}.\end{equation}
\end{prop}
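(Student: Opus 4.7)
The plan is to work locally at $x$ with the decomposition class $D(\alpha)$ passing through $x$, using Lemma \ref{Lemma: Component} to get smoothness and a direct orbit-map computation for the tangent space. Since both the hypothesis and conclusion are $\Ad$-equivariant — $\gsing$ and the Killing form are $G$-invariant, and $\g_{\Ad_g(x)}=\Ad_g(\g_x)$ — I would first conjugate so as to assume $x\in\ker(\alpha)^{\circ}$ for some $\alpha\in\Delta^{+}$. In this normal form, $\alpha(x)=0$ while $\beta(x)\neq 0$ for all $\beta\in\Delta\setminus\{\pm\alpha\}$, so inspection of the root space decomposition gives
\[
\g_x=\lf_\alpha=\h\oplus\g_\alpha\oplus\g_{-\alpha},\qquad [\g_x,\g_x]=\mathbb{C}h_\alpha\oplus\g_\alpha\oplus\g_{-\alpha}.
\]

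For smoothness, I would combine the facts quoted in Subsection \ref{Subsection: Subregular semisimple} — that $D(\alpha)$ is smooth and locally closed in $\g$, and that $\{\overline{D(\beta)}\}_{\beta\in\Delta}$ are the irreducible components of $\gsing$ — with Lemma \ref{Lemma: Component}. The lemma yields $x\notin\overline{D(\beta)}$ whenever $D(\beta)\neq D(\alpha)$, so some open neighborhood $U$ of $x$ in $\g$ satisfies $U\cap\gsing=U\cap D(\alpha)$. Smoothness of $D(\alpha)$ then forces $x$ to be a smooth point of $\gsing$ and gives $T_x\gsing=T_xD(\alpha)$.

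To compute $T_xD(\alpha)$, I would exploit the surjection
\[
\varphi:G\times\ker(\alpha)^{\circ}\longrightarrow D(\alpha),\qquad (g,y)\mapsto\Ad_g(y),
\]
whose differential at $(e,x)$ has image $[\g,x]+\ker(\alpha)$. Since $\ad_x$ is skew-symmetric with respect to the Killing form, $[\g,x]=\g_x^{\perp}$, and hence $T_x\gsing=\g_x^{\perp}+\ker(\alpha)$. The final step is the identity $\g_x^{\perp}+\ker(\alpha)=[\g_x,\g_x]^{\perp}$, which is the only piece requiring any bookkeeping. This is where I expect the only mild obstacle, though it is routine via root spaces: using that $\langle\cdot,h_\alpha\rangle$ is proportional to $\alpha$ on $\h$ and that root spaces are Killing-orthogonal to $\h$, both sides are easily seen to equal
\[
\ker(\alpha)\oplus\bigoplus_{\beta\in\Delta\setminus\{\pm\alpha\}}\g_\beta.
\]
Alternatively, one can observe $\g_x^{\perp}\cap\ker(\alpha)=0$ and compare dimensions via $\dim[\g_x,\g_x]^{\perp}=\dim\g_x^{\perp}+\dim\mathfrak{z}(\g_x)$, with $\mathfrak{z}(\g_x)=\ker(\alpha)$.
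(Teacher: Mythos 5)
Your argument is essentially the paper's own: the smoothness part (Lemma \ref{Lemma: Component} together with the smoothness and local closedness of $D(\alpha)$ and the description of the irreducible components of $\gsing$) is identical, and your tangent-space computation produces the same codimension-three subspace $[\g,x]\oplus\ker(\alpha)=[\g_x,\g_x]^{\perp}$ sitting inside $T_x\gsing=T_xD(\alpha)$; the only difference is packaging, since you obtain the containment from the parametrizing map $\varphi:G\times\ker(\alpha)^{\circ}\to D(\alpha)$, whereas the paper simply notes that $Gx$ and $\ker(\alpha)$ are contained in $\gsing$, so that $[\g,x]$ and $\ker(\alpha)$ lie in $T_x\gsing$. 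One step in your write-up is asserted rather than proved: surjectivity of $\varphi$ does not imply that $d\varphi_{(e,x)}$ is surjective onto $T_xD(\alpha)$ at the particular point you care about (a surjective morphism need not be submersive everywhere -- the adjoint quotient $\chi$ is surjective, yet $d\chi$ drops rank along $\gsing$), so from the orbit-map computation alone you only get the inclusion $[\g,x]+\ker(\alpha)\subseteq T_xD(\alpha)$. To upgrade this to equality you need the dimension count: $[\g,x]=\g_x^{\perp}$ meets $\ker(\alpha)\subseteq\h$ trivially, so $[\g,x]\oplus\ker(\alpha)$ has dimension $(\dim\g-\ell-2)+(\ell-1)=\dim\g-3$, while $D(\alpha)$ is smooth of codimension three, whence $\dim T_xD(\alpha)=\dim\g-3$ as well. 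This is exactly the codimension-three comparison with which the paper closes its proof, and all the facts needed for it are already quoted in Section \ref{Subsection: Subregular semisimple}, so the gap is a one-line fix; with it included, your proof is complete and coincides in substance with the paper's.
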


\begin{proof}
We first note that $x\in D(\alpha)$ for some $\alpha\in\Delta$. By Lemma \ref{Lemma: Component} and the discussion preceding it, $\overline{D(\alpha)}$ is the unique irreducible component of $\gsing$ containing $x$. This combines with the smoothness of $D(\alpha)$ to imply that $x$ is a smooth point of $\gsing$.

Since each element of $D(\alpha)$ is $G$-conjugate to an element of $\mathrm{ker}(\alpha)^{\circ}$, it suffices to prove \eqref{Equation: Tangent space equation} for $x\in\mathrm{ker}(\alpha)^{\circ}$. Note that $\g_x=\mathfrak{l}_{\alpha}$ in this case, so that $$[\g_x,\g_x]^{\perp}=\mathfrak{u}_{\alpha}^{-}\oplus\mathrm{ker}(\alpha)\oplus\mathfrak{u}_{\alpha}.$$ At the same time, the inclusions $\mathrm{ker}(\alpha)\subseteq\gsing$ and $Gx\subseteq\gsing$ force
$\mathrm{ker}(\alpha)$ and
$$T_xGx=[\g,x]=\mathfrak{u}_{\alpha}^{-}\oplus\mathfrak{u}_{\alpha}$$ to be subspaces of $T_x\gsing$. The previous two sentences imply that $$[\g_x,\g_x]^{\perp}\subseteq T_x\gsing.$$ We also know that $[\g_x,\g_x]^{\perp}=\mathfrak{u}_{\alpha}^{-}\oplus\mathrm{ker}(\alpha)\oplus\mathfrak{u}_{\alpha}$ has codimension three in $\g$. The identity \eqref{Equation: Tangent space equation} now follows from the fact that $T_x\gsing=T_x D(\alpha)$ also has codimension three in $\g$.
\end{proof}

We conclude with the following three lemmas concerning semisimple elements and the adjoint quotient \eqref{Equation: Adjoint quotient}. To this end, recall that $\g_x$ denotes the $\g$-centralizer of $x\in\g$. We write $\mathfrak{z}(\g_x)$ for the centre of this centralizer.
 
\begin{lem}\label{Lemma: nablaIxzgx}
If $x\in\g$ is semisimple, then the following statements hold:
\begin{enumerate}
\item[(i)] $\nabla \mathcal I_x = \mathfrak{z}(\g_x)$; 
\item[(ii)] $\mathrm{rank}(d\chi_x) = \dim(\mathfrak{z}(\g_x))$. 
\end{enumerate}
\end{lem}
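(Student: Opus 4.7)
The plan is to derive (ii) from (i) first, then concentrate on (i). Indeed, the identity $\ker(d\chi_x)=(\nabla\mathcal{I}_x)^{\perp}$ recorded in the preceding discussion, combined with the non-degeneracy of the Killing form, yields
$\mathrm{rank}(d\chi_x)=\dim\g-\dim(\nabla\mathcal{I}_x)^{\perp}=\dim\nabla\mathcal{I}_x$, so once (i) is established, (ii) is an immediate dimension count.

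For the containment $\nabla\mathcal{I}_x\subseteq\mathfrak{z}(\g_x)$ in (i), I would argue by invariance. Given $f\in\mathcal{I}$ and any $g$ in the identity component $G_x^{\circ}$ of the centralizer of $x$, the $G$-invariance of $f$ together with $\Ad_g(x)=x$ and the $\Ad$-invariance of the Killing form imply $\Ad_g(\nabla f_x)=\nabla f_x$. Differentiating this identity at the identity of $G_x^{\circ}$ gives $[z,\nabla f_x]=0$ for every $z\in\g_x$, so $\nabla f_x$ centralizes $\g_x$ in $\g$. Since $x\in\g_x$, this centralizer is itself contained in $\g_x$, and therefore coincides with $\mathfrak{z}(\g_x)$.

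For the reverse inclusion I would compute $\mathrm{rank}(d\chi_x)=\dim\mathfrak{z}(\g_x)$ via Luna's \'etale slice theorem. Because $x$ is semisimple, $Gx$ is closed in $\g$, $\ad_x$ is semisimple, and the decomposition $\g=\g_x\oplus[\g,x]$ is $G_x$-stable; thus $x+\g_x$ is a Luna slice to $Gx$ at $x$. Luna's theorem then identifies $\chi$, in an \'etale neighbourhood of $Gx$, with the adjoint quotient $\chi_{G_x}\colon\g_x\to\g_x/\!/G_x$ of the reductive Lie algebra $\g_x$, up to \'etale base change on the target; in particular $\mathrm{rank}(d\chi_x)$ equals the rank of $d(\chi_{G_x})$ at $0\in\g_x$ (the slice point $x$ being carried to $0$ under the translation $y\mapsto x+y$). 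Since $G_x^{\circ}$ is connected reductive with Lie algebra $\g_x=\mathfrak{z}(\g_x)\oplus[\g_x,\g_x]$, the map $\chi_{G_x}$ factors as the identity on $\mathfrak{z}(\g_x)$ times the adjoint quotient of the semisimple Lie algebra $[\g_x,\g_x]$; the basic invariants of the latter are homogeneous of degree at least $2$, so their differentials at the origin vanish, and $\mathrm{rank}(d(\chi_{G_x})_0)=\dim\mathfrak{z}(\g_x)$. Combined with the containment established in the previous paragraph, this forces equality in (i).

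The principal technical step is the slice-theoretic one: Luna's theorem must be invoked carefully, verifying that the induced morphism $\g_x/\!/G_x\to\g/\!/G$ is indeed \'etale at the image of the slice point. A more elementary alternative is to reduce to $x\in\h$ by $\Ad_G$-equivariance of both sides, use the Chevalley restriction isomorphism $\mathcal{I}\cong\mathbb{C}[\h]^W$ to replace $d\chi_x$ by the differential of the quotient $\pi\colon\h\to\h/W$, and then invoke Steinberg's theorem (the stabilizer $W_x$ is generated by the reflections $s_{\alpha}$ with $\alpha(x)=0$) together with Chevalley--Shephard--Todd to conclude $\mathrm{rank}(d\pi_x)=\dim\h^{W_x}=\dim\mathfrak{z}(\g_x)$, where the final identification follows from the decomposition $\g_x=\h\oplus\bigoplus_{\alpha(x)=0}\g_{\alpha}$.
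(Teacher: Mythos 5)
Your argument is correct, but it follows a genuinely different path from the paper's. The paper proves (i) by citing two facts: Richardson's result that the $\mathcal{I}$-module of $G$-invariant vector fields on $\g$ is generated by $\nabla f_1,\dots,\nabla f_\ell$, so that $\nabla\mathcal{I}_x$ coincides with the evaluation of this module at $x$, and Broer's computation that this evaluation equals $\mathfrak{z}(\g_x)$ at a semisimple point; part (ii) is then deduced exactly as in your opening paragraph. You instead prove the inclusion $\nabla\mathcal{I}_x\subseteq\mathfrak{z}(\g_x)$ by an elementary $G_x$-equivariance argument and get the reverse inclusion from an independent count $\mathrm{rank}(d\chi_x)=\dim\mathfrak{z}(\g_x)$, via Luna's slice theorem or via Chevalley restriction plus Steinberg. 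What this buys is a more self-contained and arguably more conceptual argument (and it yields (ii) directly), at the price of heavier machinery; the paper's proof is shorter because the hard content is outsourced to the literature. One point you should make explicit in the slice version: Luna's theorem involves invariants for the full stabilizer $G_x$, while your factorization of the invariant ring (all linear functionals on $\mathfrak{z}(\g_x)$, generators of degree at least two on $[\g_x,\g_x]$) is justified only for $G_x^{\circ}$; if the component group acted nontrivially on $\mathfrak{z}(\g_x)$, the rank at $0$ could drop below $\dim\mathfrak{z}(\g_x)$. This is harmless here because for semisimple $x$ the stabilizer $G_x$ is the centralizer of the torus obtained as the closure of $\{\exp(tx):t\in\C\}$, hence connected, but that observation belongs in the proof. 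Similarly, in the Weyl-group alternative the substantive input is that the differentials at $x$ of $W$-invariants span all of $(\h^{*})^{W_x}$, not merely lie in it (e.g.\ via \'etaleness of $\h/W_x\to\h/W$ at $[x]$); note that only the inequality $\mathrm{rank}(d\chi_x)\geq\dim\h^{W_x}$ is actually needed, since your containment already supplies the opposite bound.
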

\begin{proof}
Let $\mathcal V$ denote the $\mathcal I$-module of $G$-invariant vector fields on $\g$. By \cite[Section 1]{Richardson},  this module is generated by the gradients $\nabla f_1,\dots,\nabla f_{\ell}$. We therefore have 
$$\nabla\mathcal I_x = \{v(x): v\in \mathcal V\} =:\mathcal V_x.$$
On the other hand, \cite[Proposition 3.6.2 (ii)]{BroerLectures} shows that $\mathcal V_x = \mathfrak{z}(\g_x)$. This establishes (i), while (ii) follows from (i) and the identity
$\mathrm{ker}(d\chi_x)=(\nabla \mathcal{I}_x)^{\perp}$.
\end{proof}
 
\begin{lem}\label{Lemma: RankOfChiatx} 
If $x\in\g$ is subregular and semisimple, then the differential of $\chi$ has rank $\ell-1$ at $x$.
\end{lem}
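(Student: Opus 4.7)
The plan is to combine Lemma \ref{Lemma: nablaIxzgx}(ii) with a direct computation of the centre of $\g_x$. Since Lemma \ref{Lemma: nablaIxzgx}(ii) gives $\mathrm{rank}(d\chi_x) = \dim(\mathfrak{z}(\g_x))$ for semisimple $x$, it suffices to verify that $\dim(\mathfrak{z}(\g_x))=\ell-1$ whenever $x$ is subregular and semisimple.

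First I would reduce to the situation $x \in \mathrm{ker}(\alpha)^{\circ}$ for some $\alpha \in \Delta$. As recalled in Subsection \ref{Subsection: Subregular semisimple}, any subregular semisimple element lies in $D(\alpha) = G\mathrm{ker}(\alpha)^{\circ}$ for some $\alpha$, so one may assume after applying an inner automorphism that $x \in \mathrm{ker}(\alpha)^{\circ}$; the rank of $d\chi$ at $x$ is invariant under such conjugation because $\chi$ is $G$-invariant.

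Next I would identify the centralizer. Exactly as in the proof of Proposition \ref{Proposition: Subregular semisimple tangent space}, the definition of $\mathrm{ker}(\alpha)^{\circ}$ forces $\beta(x) \neq 0$ for every $\beta \in \Delta \setminus \{\alpha,-\alpha\}$, so $\g_x = \mathfrak{l}_{\alpha} = \g_{-\alpha} \oplus \h \oplus \g_{\alpha}$. Since $\{e_{\alpha},h_{\alpha},e_{-\alpha}\}$ is an $\mathfrak{sl}_2$-triple spanning $[\mathfrak{l}_{\alpha},\mathfrak{l}_{\alpha}]$, the Levi subalgebra $\mathfrak{l}_{\alpha}$ decomposes as $[\mathfrak{l}_{\alpha},\mathfrak{l}_{\alpha}] \oplus \mathfrak{z}(\mathfrak{l}_{\alpha})$, and a short root-space computation shows $\mathfrak{z}(\mathfrak{l}_{\alpha}) = \mathrm{ker}(\alpha) \subseteq \h$, which has dimension $\ell-1$. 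Feeding this into Lemma \ref{Lemma: nablaIxzgx}(ii) yields $\mathrm{rank}(d\chi_x) = \ell-1$.

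There is no real obstacle here; the statement is essentially a packaging of the standard description of centralizers of subregular semisimple elements together with the preceding lemma. The only mild subtlety is making sure the reduction to $\mathrm{ker}(\alpha)^{\circ}$ is clean, which is handled by $G$-invariance of $\chi$ and the description of $D(\alpha)$ already used in Proposition \ref{Proposition: Subregular semisimple tangent space}.
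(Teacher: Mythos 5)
Your proposal is correct and follows essentially the same route as the paper: both reduce to the observation that $\g_x$ is conjugate to $\mathfrak{l}_{\alpha}$ with $\mathfrak{z}(\mathfrak{l}_{\alpha})=\mathrm{ker}(\alpha)$ of dimension $\ell-1$, and then apply Lemma \ref{Lemma: nablaIxzgx}(ii). The only cosmetic difference is that the paper conjugates the centralizer rather than the point (so it never needs the $G$-invariance-of-$\chi$ step), but this is an equivalent bookkeeping choice.
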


\begin{proof}
As discussed earlier, $x\in D(\alpha)$ for some $\alpha\in\Delta$. It follows that $\g_x$ is $G$-conjugate to $\g_y$ for some $y\in\mathrm{ker}(\alpha)^{\circ}$, or equivalently that $\g_x$ is conjugate to $\mathfrak{l}_{\alpha}$. This forces $\mathfrak{z}(\g_x)$ and $\mathfrak{z}(\mathfrak{l}_{\alpha})=\mathrm{ker}(\alpha)$ to be conjugate, so that
$\dim(\mathfrak{z}(\g_x))=\ell-1$. An application of Lemma \ref{Lemma: nablaIxzgx} now completes the proof.  
\end{proof}

\begin{lem}\label{Lemma: SpanGradientsChi}
If $x\in\mathrm{ker}(\alpha)^{\circ}$ for some $\alpha\in\Delta$, then the following statements hold:
\begin{itemize}
\item[(i)] $\nabla \mathcal{I}_x=\mathrm{ker}(\alpha)$;
\item[(ii)] $\ker (d(\chi\big\vert_{\h})_x)=[\mathfrak{g}_{-\alpha},\g_{\alpha}]$.
\end{itemize} 
\end{lem}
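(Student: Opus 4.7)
For part (i), the plan is to identify $\nabla\mathcal{I}_x$ via Lemma \ref{Lemma: nablaIxzgx}. Since $x\in\ker(\alpha)^\circ$, one has $\beta(x)\neq 0$ for every $\beta\in\Delta\setminus\{\alpha,-\alpha\}$, and a direct root-space calculation gives $\g_x=\lf_\alpha=\g_{-\alpha}\oplus\h\oplus\g_\alpha$. It remains to compute $\mathfrak{z}(\lf_\alpha)$; bracketing a general element $h+u e_\alpha+v e_{-\alpha}$ against $e_{\pm\alpha}$ and any $h'\in\h$ with $\alpha(h')\neq 0$ quickly forces $u=v=0$ and $\alpha(h)=0$, so $\mathfrak{z}(\lf_\alpha)=\ker(\alpha)$. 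Lemma \ref{Lemma: nablaIxzgx}(i) then yields $\nabla\mathcal{I}_x=\ker(\alpha)$.

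For part (ii), I would first observe that the differential of $\chi\big\vert_\h$ at $x$ is simply the restriction of $d\chi_x$ to $T_x\h=\h$, so
$$\ker(d(\chi\big\vert_\h)_x)=\h\cap\ker(d\chi_x)=\h\cap(\nabla\mathcal{I}_x)^{\perp}=\h\cap\ker(\alpha)^{\perp},$$
where the last equality uses part (i). Since $\ker(\alpha)\subseteq\h$ and the Killing form restricts to a nondegenerate form on $\h$, the intersection $\h\cap\ker(\alpha)^{\perp}$ is precisely the Killing-orthogonal complement of $\ker(\alpha)$ inside $\h$, which is one-dimensional. On the other hand, $[\g_{-\alpha},\g_\alpha]=\mathbb{C}h_\alpha$ is also one-dimensional, so it suffices to exhibit $h_\alpha$ as a nonzero element of $\h\cap\ker(\alpha)^{\perp}$.

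To see this, recall the standard identification of $\h^*$ with $\h$ via the Killing form: there is a unique $t_\alpha\in\h$ with $\langle t_\alpha,h'\rangle=\alpha(h')$ for every $h'\in\h$, and one checks from $[e_\alpha,e_{-\alpha}]=h_\alpha$ together with the invariance of the Killing form that $h_\alpha$ is a nonzero scalar multiple of $t_\alpha$. Consequently $\langle h_\alpha,h'\rangle$ is proportional to $\alpha(h')$, vanishing for every $h'\in\ker(\alpha)$, so $h_\alpha\in\h\cap\ker(\alpha)^{\perp}$. Comparing dimensions gives $\ker(d(\chi\big\vert_\h)_x)=\mathbb{C}h_\alpha=[\g_{-\alpha},\g_\alpha]$. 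The only substantive step is the centralizer computation in part (i); the rest amounts to the orthogonality fact $h_\alpha\perp\ker(\alpha)$ together with a dimension count.
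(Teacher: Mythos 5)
Your proposal is correct and follows essentially the same route as the paper: part (i) via $\g_x=\lf_\alpha$, $\mathfrak{z}(\lf_\alpha)=\ker(\alpha)$, and Lemma \ref{Lemma: nablaIxzgx}, and part (ii) via $\ker(d(\chi\big\vert_{\h})_x)=(\nabla\mathcal{I}_x)^{\perp}\cap\h=\ker(\alpha)^{\perp}\cap\h=[\g_{-\alpha},\g_{\alpha}]$. You merely spell out the standard facts (the centralizer computation and the orthogonality $h_\alpha\perp\ker(\alpha)$) that the paper leaves implicit.
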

\begin{proof}
As observed in the proof of Lemma \ref{Lemma: RankOfChiatx}, we have $\g_x =\lf_\alpha$ and $\mathfrak{z}(\lf_\alpha)=\ker(\alpha)$. 
Part (i) therefore follows from Lemma \ref{Lemma: nablaIxzgx}. 

We now verify (ii). Note that $$\ker(d\chi_x\big\vert_{\h})=(\nabla\mathcal{I}_x)^{\perp}\cap\mathfrak{h} = \ker(\alpha)^\perp\cap\h,$$ where the final instance of equality follows from (i). On the other hand, one has
$$\ker(\alpha)^\perp\cap\h=[\mathfrak{g}_{-\alpha},\g_{\alpha}].$$ These last two sentences prove (ii).   
\end{proof}

\section{The proof of Theorem \ref{Thm: Main Theorem}}\label{Section: The proof}

\subsection{A simple lemma}
Our proof of Theorem \ref{Thm: Main Theorem} is partly based on the following simple fact. 

\begin{lem}\label{Lemma: KeyLemma}
Suppose that $a\in\greg$. Assume that there exists a smooth point of $\gsing^a$ at which the differential of the restriction $F_a\big\vert_{\gsing^a}:\gsing^a\longrightarrow\mathrm{Spec}(\mathcal F_a)$ has rank $b-1$. The bifurcation diagram $\Sigma_a$ then has codimension one in $\mathrm{Spec}(\mathcal F_a)$.
\end{lem}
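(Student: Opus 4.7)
I would prove the lemma by establishing $\dim\overline{\Sigma_a}=b-1$ through a pair of pointwise rank computations, one giving the lower and the other the upper bound.

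For the lower bound, let $x$ be the smooth point of $\gsing^a$ postulated in the hypothesis. Since $x$ is smooth, it lies in a unique irreducible component $X_0\subseteq\gsing^a$, and $T_x X_0 = T_x\gsing^a$; consequently, the restriction $\varphi:=F_a\big\vert_{X_0}:X_0\longrightarrow\mathrm{Spec}(\mathcal{F}_a)$ is a morphism from an irreducible variety satisfying $\mathrm{rank}(d\varphi_x)=b-1$. I would then appeal to the following standard principle in characteristic zero: for any morphism $\varphi$ from an irreducible variety, the rank of $d\varphi$ at any smooth point is bounded above by $\dim\overline{\varphi(X_0)}$. This may be justified by factoring $\varphi$ through its scheme-theoretic image $\overline{\varphi(X_0)}$, applying generic smoothness to the resulting dominant morphism, and combining this with the lower semicontinuity of the rank function. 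The principle applied here yields $\dim\overline{F_a(X_0)}\geq b-1$, and hence $\dim\overline{\Sigma_a}\geq b-1$.

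For the upper bound, I would invoke Bolsinov's characterization of $\gsing^a$ as the critical locus of $F_a$: at every $y\in\gsing^a$, the differential $(dF_a)_y$ has rank strictly less than $b$, so the restricted differential $d(F_a\big\vert_{\gsing^a})_y$ has rank at most $b-1$. Applying the same algebro-geometric principle component-by-component to $\gsing^a$, one concludes that the image of each irreducible component has closure of dimension at most $b-1$, whence $\dim\overline{\Sigma_a}\leq b-1$. (Alternatively, this upper bound is part of the content of the theorem quoted just above Theorem~\ref{Thm: Main Theorem}.) Combining the two bounds gives codimension exactly one.

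The main obstacle is the bridge between a pointwise rank computation and the global image dimension. In characteristic zero this is routine via generic smoothness and semicontinuity of rank, but one must be careful to apply it to each irreducible component of $\gsing^a$ separately—and to accommodate the fact that the scheme-theoretic image of $\varphi$ may be singular, so that the identification ``generic rank equals image dimension'' must be set up on an open dense smooth locus of $\overline{\varphi(X_0)}$.
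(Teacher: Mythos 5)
Your proposal is correct and follows essentially the same route as the paper's proof: the rank-$(b-1)$ differential at the smooth point gives $\dim\overline{\Sigma_a}\geq b-1$, while the fact that $\Sigma_a$ is the set of critical values of $F_a$ (equivalently, Sard-type generic smoothness in characteristic zero) gives the upper bound $\dim\overline{\Sigma_a}\leq b-1$. The paper simply states these two bounds without spelling out the generic-smoothness and semicontinuity details that you supply; your component-by-component care is a fuller justification of the same argument, not a different one.
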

\begin{proof}
The assumptions imply that $\Sigma_a=F_a(\gsing^a)$ has dimension at least $b-1$. This amounts to $\Sigma_a$ having codimension at most one in $\mathrm{Spec}(\mathcal{F}_a)$, as $\mathrm{Spec}(\mathcal{F}_a)\cong\mathbb{C}^b$. On the other hand, we recall that $\Sigma_a$ is the set of critical values of $F_a$. This forces $\Sigma_a$ to have codimension at least one in $\mathrm{Spec}(\mathcal{F}_a)$. It follows that $\Sigma_a$ has codimension exactly one in $\mathrm{Spec}(\mathcal{F}_a)$.
\end{proof}

Proving that $\Sigma_a$ has codimension one thereby reduces to finding a smooth point $x\in\gsing^a$ satisfying the assumptions of Lemma \ref{Lemma: KeyLemma}. This observation is invoked at the ends of Sections \ref{Subsection: Proof i} and \ref{Subsection: Proof ii}.  
 
\subsection{The proof of Theorem \ref{Thm: Main Theorem}(i)}\label{Subsection: Proof i}
We now provide the remaining ingredients needed to prove Theorem \ref{Thm: Main Theorem}(i). To this end, suppose that $a\in\greg$ is not nilpotent. It suffices to assume that $a$ is a non-nilpotent element of $\xi+\h$, i.e. $a=\xi+y$ for some $y\in\h\setminus\{0\}$. Choose $\alpha\in\Pi$ with the property that $\alpha(y)\neq 0$. 

\begin{lem}\label{Lemma: Smooth point}
If $x\in\mathrm{ker}(\alpha)^{\circ}$, then $x$ is a smooth point of $\gsing^a$ and
$$T_x(\gsing^a)=\mathbb{C}a\oplus\mathfrak{u}_{\alpha}^{-}\oplus\mathrm{ker}(\alpha)\oplus\mathfrak{u}_{\alpha}^{+}.$$
\end{lem}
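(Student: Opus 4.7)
The plan is to realize $\gsing^a$ near $x$ as the image of the sweep map $\Phi\colon\gsing\times\mathbb{C}\longrightarrow\g$, $(z,\lambda)\mapsto z+\lambda a$, and to show that $\Phi$ is a local embedding at $(x,0)$ whose image coincides with $\gsing^a$ in a neighborhood of $x$.

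Since $x\in\ker(\alpha)^{\circ}$ is subregular and semisimple, Proposition \ref{Proposition: Subregular semisimple tangent space} tells us that $\gsing$ is smooth at $x$ with $T_x\gsing=\mathfrak{u}_{\alpha}^{-}\oplus\ker(\alpha)\oplus\mathfrak{u}_{\alpha}^{+}$. The differential of $\Phi$ at $(x,0)$ sends $(v,\mu)$ to $v+\mu a$, so its image is $T_x\gsing+\mathbb{C}a$, and it is injective iff $a\notin T_x\gsing$. Decomposing $a=\xi+y=\sum_{\beta\in\Pi}e_{-\beta}+y$, the $\g_{-\alpha}$-component of $a$ is $e_{-\alpha}\neq 0$, while $T_x\gsing$ contains no $\g_{-\alpha}$-summand; equivalently, $\alpha(y)\neq 0$ forces $y$ to lie outside $\ker(\alpha)$. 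Either way $a\notin T_x\gsing$, so $d\Phi_{(x,0)}$ is injective with image $\mathbb{C}a\oplus\mathfrak{u}_{\alpha}^{-}\oplus\ker(\alpha)\oplus\mathfrak{u}_{\alpha}^{+}$.

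To promote this infinitesimal statement to local smoothness of $\gsing^a$ at $x$, I must confine preimages of nearby points under $\Phi$ to a neighborhood of $(x,0)$. For $\lambda\neq 0$, factor
$$x-\lambda a=-\lambda\bigl(\xi+(y-\lambda^{-1}x)\bigr).$$
Since $y-\lambda^{-1}x\in\mathfrak{h}$, Kostant's lemma gives $\xi+(y-\lambda^{-1}x)\in\xi+\mathfrak{h}\subseteq\greg$, and scalar multiplication by $-\lambda\neq 0$ preserves centralizers, so $x-\lambda a\in\greg$ and a fortiori $x-\lambda a\notin\gsing$. This yields $\Phi^{-1}(x)=\{(x,0)\}$. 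A standard limit argument then rules out preimages drifting off: for $w_n\to x$ and $(z_n,\lambda_n)\in\Phi^{-1}(w_n)$, bounded subsequences $\lambda_n\to\lambda_\ast$ produce a point of $\Phi^{-1}(x)=\{(x,0)\}$, while $|\lambda_n|\to\infty$ would force the projective direction of $z_n=w_n-\lambda_n a$ to approach that of $a\in\greg$, contradicting $z_n\in\gsing$ by upper semicontinuity of centralizer dimension.

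Combining the injectivity of $d\Phi_{(x,0)}$ with smoothness of $\gsing\times\mathbb{C}$ at $(x,0)$ and this local properness, one identifies $\gsing^a$ in a neighborhood of $x$ with a smooth subvariety of $\g$ of tangent space $\mathbb{C}a\oplus\mathfrak{u}_{\alpha}^{-}\oplus\ker(\alpha)\oplus\mathfrak{u}_{\alpha}^{+}$, yielding both conclusions. The main technical point is the last step of ruling out stray branches of $\gsing^a$ accumulating at $x$, and the Kostant-slice identity for $x-\lambda a$ is the key that unlocks it.
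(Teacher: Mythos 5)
Your proposal is correct and follows essentially the same route as the paper: smoothness of $\gsing$ at the subregular semisimple point $x$ with $T_x\gsing=[\lf_\alpha,\lf_\alpha]^\perp=\uu_\alpha^-\oplus\ker(\alpha)\oplus\uu_\alpha$ (Proposition \ref{Proposition: Subregular semisimple tangent space}), combined with the transversality $a\notin T_x\gsing$. The only difference is that you additionally make the sweep map explicit and check, via $x-\lambda a=-\lambda\bigl(\xi+(y-\lambda^{-1}x)\bigr)\in\greg$ for $\lambda\neq 0$ together with a local properness argument, that no other branch of $\gsing^a$ passes through or accumulates at $x$ — a point the paper's shorter proof leaves implicit.
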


\begin{proof}
Proposition \ref{Proposition: Subregular semisimple tangent space} implies that $x$ is a smooth point of $\gsing$, and this proposition combines with the identity $\g_x=\mathfrak{l}_{\alpha}$ to give
$$T_x\gsing=[\mathfrak{l}_{\alpha},\mathfrak{l}_{\alpha}]^{\perp}=\mathfrak{u}_{\alpha}^{-}\oplus\mathrm{ker}(\alpha)\oplus\mathfrak{u}_{\alpha}^{+}.$$ It is also clear that $a\not\in\mathfrak{u}_{\alpha}^{-}\oplus\mathrm{ker}(\alpha)\oplus\mathfrak{u}_{\alpha}^{+}$, so that the tangent spaces of $\gsing$ and $x+\mathbb{C}a$ at $x$ intersect trivially. We conclude that $x$ is a smooth point of $\gsing^a$, and that
$$T_x(\gsing^a)=\mathbb{C}a\oplus\mathfrak{u}_{\alpha}^{-}\oplus\mathrm{ker}(\alpha)\oplus\mathfrak{u}_{\alpha}^{+}.$$  	
\end{proof}

\begin{lem}\label{Lemma: Rank}
If $x\in\mathrm{ker}(\alpha)^{\circ}$, then the differential of $F_a\big\vert_{Gx}:Gx\longrightarrow\mathrm{Spec}(\mathcal{F}_a)$ has rank $u-1$ at $x$. 
\end{lem}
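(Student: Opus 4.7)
The plan is to recast the rank as a linear-algebra question about $V:=\nabla\mathcal{F}_a|_x\subseteq\g$ and to exploit Kostant's theorem on regular centralizers together with the Kostant slice containment $\g_z\subseteq\bb^-$ for $z\in\xi+\h$. Under the Killing identification $T_xGx=\g_x^\perp$, the rank of $d(F_a|_{Gx})_x$ equals $\dim(V+\g_x)-\dim\g_x$, equivalently the dimension of the span of Hamiltonian vectors $X_f|_x=[\nabla f|_x,x]$ for $f\in\mathcal{F}_a$. Since $\mathcal{F}_a$ is generated by the functions $(f_i)_{\lambda,a}$ with gradient $\nabla f_i|_{x+\lambda a}$ at $x$, Kostant's identity $[\nabla f_i|_{x+\lambda a},\,x+\lambda a]=0$ at the regular point $x+\lambda a$ yields
\[[x,\nabla f_i|_{x+\lambda a}]=-\lambda\,[a,\nabla f_i|_{x+\lambda a}],\]
so $[x,V]=[a,V]$. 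The scaling identity $\nabla f_i|_{x+\lambda a}=\lambda^{d_i-1}\nabla f_i|_{a+\lambda^{-1}x}$ identifies the leading coefficient in $\lambda$ with $\nabla f_i|_a$, and Kostant at the regular element $a$ forces $\g_a\subseteq V$. Hence $V\cap\g_a=\g_a$ has dimension $\ell$ and
\[\mathrm{rank}\,(dF_a|_{Gx})_x=\dim[a,V]=\dim V-\ell,\]
so the lemma is equivalent to the assertion $\dim V=b-1$.

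For the upper bound $\dim V\leq b-1$, it suffices to note that $x\in\gsing\subseteq\gsing^a$ is a critical point of $F_a$, so $dF_a|_x$ is not surjective. A compatible geometric upper bound $\mathrm{rank}\leq u-1$ is furnished by the Kostant slice: for $\lambda\neq 0$ the element $\lambda^{-1}(x+\lambda a)=\xi+(y+\lambda^{-1}x)$ lies in $\xi+\h$ with $\h$-component regular for generic $\lambda$ (since $\alpha(y)\neq 0$ and $\beta(x)\neq 0$ for $\beta\neq\pm\alpha$). A downward induction on root height then shows $\g_{\xi+h}\subseteq\bb^-$ whenever $h\in\h$ is regular, giving $\g_{x+\lambda a}\subseteq\bb^-$ for generic $\lambda$; at $\lambda=0$ we have $\nabla\mathcal{I}_x=\ker(\alpha)\subseteq\bb^-$. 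By polynomial interpolation $V\subseteq\bb^-$, and since $\g_x\cap\bb^-=\h\oplus\g_{-\alpha}$ with quotient $\mathfrak{u}_\alpha^-$ of dimension $u-1$, this yields $\mathrm{rank}\leq u-1$.

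The main obstacle is showing $\dim V\geq b-1$. There is at least one linear relation among the $b$ generators $\{\nabla f_{ij}|_x:1\leq i\leq\ell,\,0\leq j\leq d_i-1\}$ of $V$, since by Lemma \ref{Lemma: RankOfChiatx} the bottom-coefficient gradients $\{\nabla f_{i,0}|_x\}_{i=1}^\ell$ span only $\nabla\mathcal{I}_x=\ker(\alpha)$ of dimension $\ell-1$; the task is to show this is the only relation. A direct strategy pairs $\g_a\subseteq V$ (from top coefficients) with $\ker(\alpha)\subseteq V$ (from bottom coefficients), which are disjoint because $\g_a\cap\h=0$ --- indeed any $h\in\g_a\cap\h$ must satisfy $[h,\xi]=0$, forcing $\beta(h)=0$ for all $\beta\in\Pi$ and hence $h=0$ --- giving $\dim V\geq 2\ell-1$. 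The remaining $u-\ell$ dimensions must come from the intermediate centralizers $\g_{x+\lambda a}$ for generic $\lambda$; verifying that the polynomial family $\lambda\mapsto\g_{x+\lambda a}\subseteq\bb^-$ varies enough to rule out further relations is the technical heart of the argument.
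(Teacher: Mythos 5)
There is a genuine gap. Your reduction is fine and actually quite clean: identifying the rank of $d\big(F_a\big\vert_{Gx}\big)_x$ with $\dim[x,V]$ for $V=\nabla(\mathcal{F}_a)_x$, the identity $[x,V]=[a,V]$ from $[\nabla f_{i},z]=0$ at $z=x+\lambda a$, the containment $\g_a\subseteq V$ via the top coefficients, and the upper bound $\dim V\leq b-1$ from criticality are all correct, so the lemma is indeed equivalent to $\dim\nabla(\mathcal{F}_a)_x=b-1$. But that lower bound is the entire content of the statement, and you do not prove it: pairing $\g_a$ with $\ker(\alpha)$ only gives $\dim V\geq 2\ell-1$, which misses the required $b-1=\ell+u-1$ by $u-\ell$ dimensions for every simple $\g$ of rank at least $2$. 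The final sentence of your proposal concedes exactly this --- that one must show the family $\lambda\mapsto\g_{x+\lambda a}\subseteq\mathfrak{b}^-$ contributes $u-\ell$ further independent directions --- and no argument is offered. Making this precise is essentially Bolsinov's codimension estimate for the argument-shift pencil (controlling $\dim V$ by the singular values of $\lambda$ and the defect $\dim\g_{x}-\ell$ at each), so the ``technical heart'' you defer is not a routine verification.

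For comparison, the paper does not compute $\dim\nabla(\mathcal{F}_a)_x$ at all; it quotes a ready-made criterion (Lemma 2.1 of \cite{Panyushev2}) which gives rank $\tfrac12\dim(Gx)=u-1$ for $F_a\big\vert_{Gx}$ at $x$ provided (a) $(\C a+\C x)\cap\gsing\subseteq\C x$ and (b) the projection $\overline{a}$ of $a$ onto $\g_x=\mathfrak{l}_\alpha$ is regular in $\g_x$. Both hypotheses are then checked in a few lines: (a) because $za+wx\in z\xi+\mathfrak{b}\subseteq\greg$ for $z\neq 0$, and (b) because $\overline{a}=e_{-\alpha}+y$ projects into $e_{-\alpha}+\C h_\alpha$ inside $[\mathfrak{l}_\alpha,\mathfrak{l}_\alpha]\cong\mathfrak{sl}_2$, giving an $\ell$-dimensional $\g_x$-centralizer. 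If you want to keep your linear-algebra framework, you would need to either import that criterion (or Bolsinov's pencil/Jordan--Kronecker machinery it rests on) or supply an independent proof that the centralizers $\g_{x+\lambda a}$, $\lambda\neq 0$, together with $\ker(\alpha)$ span a $(b-1)$-dimensional subspace of $\mathfrak{b}^-$; as written, that step is missing.
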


\begin{proof}
Let $\overline{a}$ denote the projection of $a$ onto $\g_x$ with respect to the decomposition $\g=\g_x\oplus\g_x^{\perp}$. Write $(\g_x)_{\text{reg}}$ for the regular elements of $\g_x$, i.e. the elements of $\g_x$ with $\ell$-dimensional $\g_x$-centralizers. Since $\dim(Gx)=2(u-1)$, \cite[Lemma 2.1]{Panyushev2} reduces us to verifying the following: $(\C a+\C x)\cap \g_{\text{sing}} \subseteq \C x$ and $\overline{a}\in (\g_x)_{\text{reg}}$.
	
To verify the first condition, let $z,w\in\C$ be such that $za + wx\in\gsing$. Note that
$$za+wx=z\xi+(zy+wx)\in z\xi+\mathfrak{b}\subseteq\greg$$
if $z\neq 0$ \cite[Lemma 10]{KostantLie}. We conclude that $z=0$, implying that $(\C a+\C x)\cap \g_{\text{sing}} \subseteq \C x$.  
	
To establish the second condition, we observe that $\g_x=\mathfrak{l}_{\alpha}$. It follows that $\mathfrak{z}(\g_x)=\mathrm{ker}(\alpha)$ and
$$[\g_x,\g_x]=[\mathfrak{l}_{\alpha},\mathfrak{l}_{\alpha}]=\g_{-\alpha}\oplus\mathbb{C}h_{\alpha}\oplus\mathfrak{g}_{\alpha}\cong\mathfrak{sl}_2,$$ while we clearly have $\overline{a}=e_{-\alpha}+y$. The projection of $\overline{a}$ onto $[\g_x,\g_x]$ must therefore lie in $e_{-\alpha}+\mathbb{C}h_{\alpha}$, where the projection is with respect to the decomposition
$$\g_x=[\g_x,\g_x]\oplus\mathfrak{z}(\g_x).$$ Each element of $e_{-\alpha}+\mathbb{C}h_{\alpha}$ has a one-dimensional $[\g_x,\g_x]$-centralizer, implying that the $\g_x$-centralizer of $\overline{a}$ has dimension
$$1+\dim(\mathfrak{z}(\g_x))=1+(\ell-1)=\ell.$$ This is precisely the statement that $\overline{a}\in (\g_x)_{\text{reg}}$.  
\end{proof}

\begin{lem}\label{Lemma: Evaluation} 
If $x\in\mathfrak{h}$, then $(\nabla\mathcal{F}_a)_x\subseteq\mathfrak{b}^{-}$.	
\end{lem}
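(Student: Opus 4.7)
The plan is to reduce the lemma to a statement about centralizers of elements of $\xi+\h$, and then prove that statement by induction based on the principal $\mathfrak{sl}_2$-grading of $\g$.

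Since $\mathcal{F}_a$ is generated over $\mathbb{C}$ by the polynomials $f_{\lambda,a}$ with $f\in\mathcal{I}$ and $\lambda\in\mathbb{C}$, the chain rule yields $(\nabla f_{\lambda,a})_x=(\nabla f)_{x+\lambda a}$ and reduces the problem to showing $(\nabla f)_{x+\lambda a}\in\mathfrak{b}^{-}$ for every such $f$ and $\lambda$. For $\lambda=0$, Lemma \ref{Lemma: nablaIxzgx}(i) identifies $(\nabla f)_x$ with an element of $\mathfrak{z}(\g_x)$, and since $x\in\h$ makes $\g_x$ a Levi subalgebra containing $\h$, one has $\mathfrak{z}(\g_x)\subseteq\h\subseteq\mathfrak{b}^{-}$. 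For $\lambda\neq 0$, I would write $x+\lambda a=\lambda(\xi+h')$ with $h':=x/\lambda+y\in\h$; Kostant's theorem ensures $\xi+h'\in\greg$, and the $G$-invariance of $f$ then places $(\nabla f)_{x+\lambda a}$ in $\g_{\xi+h'}$. It therefore suffices to establish the geometric claim
\begin{equation*}
\g_{\xi+h'}\subseteq\mathfrak{b}^{-}\quad\text{for every }h'\in\h.
\end{equation*}

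To prove this claim I would use the principal $\mathfrak{sl}_2$-triple $\{\xi,h^{\xi},\xi^{+}\}$ and the associated grading $\g=\bigoplus_{n\in\mathbb{Z}}\g^{(n)}$, where $\g^{(n)}$ denotes the $2n$-eigenspace of $\ad h^{\xi}$. One then has $\g^{(0)}=\h$, $\xi\in\g^{(-1)}$, the nonzero piece $\g^{(n)}$ for $n>0$ (respectively $n<0$) is spanned by root vectors of positive (respectively negative) roots of height $n$ (respectively $-n$), and $\bigoplus_{n\leq 0}\g^{(n)}=\mathfrak{b}^{-}$. Decomposing $v\in\g_{\xi+h'}$ as $v=\sum_n v_n$ with $v_n\in\g^{(n)}$ and projecting the identity $[v,\xi+h']=0$ onto $\g^{(n)}$ gives
\begin{equation*}
[v_{n+1},\xi]+[v_n,h']=0
\end{equation*}
for every $n\in\mathbb{Z}$. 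I would show $v_n=0$ for all $n\geq 1$ by induction on $n$: at $n=0$, the vanishing $[v_0,h']=0$ (since both lie in $\h$) reduces the equation to $[v_1,\xi]=0$, while at $n\geq 1$, the inductive hypothesis $v_n=0$ reduces it to $[v_{n+1},\xi]=0$.

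The hardest ingredient, and the main obstacle, is then the injectivity of $\ad\xi:\g^{(n+1)}\to\g^{(n)}$ for every $n\geq 0$. This is equivalent to $\g_\xi\cap\g^{(k)}=0$ for every $k\geq 1$, which follows from Kostant's classical fact that the principal nilpotent centralizer satisfies $\g_\xi\subseteq\mathfrak{u}^{-}$: under the principal $\mathfrak{sl}_2$-decomposition of $\g$, $\g_\xi$ is spanned by the lowest-weight vectors of the irreducible constituents, and these all have strictly negative $\ad h^{\xi}$-weight. Combining this injectivity with the induction of the previous paragraph gives $v_n=0$ for $n\geq 1$, whence $v\in\bigoplus_{n\leq 0}\g^{(n)}=\mathfrak{b}^{-}$, completing the proof of the claim and hence of the lemma.
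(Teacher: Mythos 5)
Your argument is correct, and it takes a genuinely different route from the paper. The paper's proof is essentially two citations: Lemma~1.3 of \cite{Panyushev2}, which says that $(\nabla\mathcal{F}_a)_x$ is spanned by the centralizers $\g_{x+\lambda a}$ over the $\lambda$ with $x+\lambda a\in\greg$, together with \cite[Corollary 2.12]{CrooksRoeserDocumenta}, which gives $\g_{x+\lambda a}\subseteq\mathfrak{b}^-$ because $x+\lambda a$ is a regular element of $\mathfrak{b}^-$. You replace the first citation by the elementary observation that $(\nabla\mathcal{F}_a)_x$ is spanned by the gradients of the generators, $(\nabla f_{\lambda,a})_x=(\nabla f)_{x+\lambda a}\in\g_{x+\lambda a}$ for $f\in\mathcal{I}$ (only the easy inclusion of Panyushev's lemma is needed, with the $\lambda=0$ case handled via $\nabla\mathcal{I}_x=\mathfrak{z}(\g_x)\subseteq\h$), and you replace the second citation by a self-contained proof that $\g_{\xi+h'}\subseteq\mathfrak{b}^-$ using the principal $\mathfrak{sl}_2$-grading: projecting $[v,\xi+h']=0$ onto graded pieces and using that $\ad\xi$ has no kernel in positive degrees, since the lowest-weight vectors spanning $\g_\xi$ all have strictly negative weight (the irreducible constituents have highest weights $2m_i$ with exponents $m_i\geq 1$). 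What the paper's approach buys is brevity and uniformity with the rest of Section~3, where Panyushev's Lemma~1.3 is used again (e.g.\ in Lemma~\ref{Lemma: SpanGradientsFa}); what your approach buys is independence from both external results, at the cost of redoing a known fact about centralizers of regular elements of a Borel subalgebra.
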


\begin{proof}
The main ingredient of the proof is Lemma 1.3 of \cite{Panyushev2}. This lemma states that $(\nabla\mathcal{F}_a)_x$ is the subspace of $\g$ generated by the centralizers $\g_{x+\lambda a}$, taken over all $\lambda\in\mathbb{C}$ with $x+\lambda a\in\greg$. One has $x+\lambda a\in\mathfrak{b}^{-}$ for all $\lambda\in\mathbb{C}$, so that $\g_{x+\lambda a}\subseteq\mathfrak{b}^{-}$ if $x+\lambda a\in\greg$ \cite[Corollary 2.12]{CrooksRoeserDocumenta}. These last two sentences force $(\nabla\mathcal{F}_a)_x\subseteq\mathfrak{b}^{-}$ to hold.  	
\end{proof}

One has the decomposition
\begin{equation}\label{Equation: Triple decomposition}\mathfrak{p}_{\alpha}^{-}=\mathfrak{u}_{\alpha}^{-}\oplus[\mathfrak{l}_{\alpha},\mathfrak{l}_{\alpha}]\oplus\mathrm{ker}(\alpha).\end{equation}

\begin{lem}\label{Lemma: SpanGradientsFa}
Suppose that $x\in\mathrm{ker}(\alpha)^{\circ}$. We have
$$(\nabla \mathcal F_a)_x\cap [\mathfrak{l}_\alpha,\mathfrak{l}_\alpha] =\mathbb{C}d,$$ where
$d$ is the projection of $a$ onto $[\mathfrak{l}_{\alpha},\mathfrak{l}_{\alpha}]$ with respect to \eqref{Equation: Triple decomposition}.   
\end{lem}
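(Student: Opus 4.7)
The plan is to establish the desired equality through two complementary facts: (i) $d\in (\nabla\mathcal{F}_a)_x$, and (ii) $e_{-\alpha}\notin (\nabla\mathcal{F}_a)_x$. Lemma \ref{Lemma: Evaluation} confines $(\nabla\mathcal{F}_a)_x\cap [\mathfrak{l}_\alpha,\mathfrak{l}_\alpha]$ to $\mathfrak{b}^{-}\cap [\mathfrak{l}_\alpha,\mathfrak{l}_\alpha]=\mathfrak{g}_{-\alpha}\oplus\C h_\alpha$, a plane in which $\{d,e_{-\alpha}\}$ is a basis; combined with (i) and (ii), a dimension count will force the intersection to coincide with $\C d$. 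Both (i) and (ii) will rest on the description of $(\nabla\mathcal{F}_a)_x$ recalled in Lemma \ref{Lemma: Evaluation}: it is the linear span of the centralizers $\mathfrak{g}_{x+\lambda a}$ for all $\lambda$ with $x+\lambda a\in\greg$, which as in the proof of Lemma \ref{Lemma: Rank} is all $\lambda\in\C^\ast$.

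For (i), the plan is to identify $d$ with an element of the Grassmannian limit $\mathcal{L}_0:=\lim_{\lambda\to 0}\mathfrak{g}_{x+\lambda a}$, which is contained in $(\nabla\mathcal{F}_a)_x$ because the latter is a finite-dimensional (hence closed) subspace of $\mathfrak{g}$ containing each $\mathfrak{g}_{x+\lambda a}$. The limit $\mathcal{L}_0$ is $\ell$-dimensional and lies in $\mathfrak{g}_x\cap \mathfrak{b}^{-}=\mathfrak{g}_{-\alpha}\oplus\mathfrak{h}$; specializing the family $\nabla f|_{x+\lambda a}\in\mathfrak{g}_{x+\lambda a}$ at $\lambda=0$ for each $f\in\mathcal{I}$ further gives $\ker\alpha=\nabla\mathcal{I}_x\subseteq \mathcal{L}_0$ by Lemma \ref{Lemma: SpanGradientsChi}(i). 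Thus $\mathcal{L}_0/\ker\alpha$ is a line inside $(\mathfrak{g}_{-\alpha}\oplus\mathfrak{h})/\ker\alpha\cong \mathfrak{g}_{-\alpha}\oplus\C h_\alpha$. To pin this line down as $\C d$, one takes a formal family $v(\lambda)=v_0+\lambda v_1+O(\lambda^2)\in \mathfrak{g}_{x+\lambda a}$ with $v_0=c_1e_{-\alpha}+c_2h_\alpha$. The order-one coefficient of $[v(\lambda),x+\lambda a]=0$ reads $[v_0,a]+[v_1,x]=0$; since $\mathfrak{g}_{-\alpha}\not\subseteq \mathrm{im}(\ad_x)$ (as $\alpha(x)=0$), the $\mathfrak{g}_{-\alpha}$-component of $[v_0,a]$ must vanish. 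Extracting this component from $[e_{-\alpha},\xi+y]$ and $[h_\alpha,\xi]$ gives $c_1\alpha(y)-2c_2=0$, which forces $v_0\in\C d$.

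For (ii), the plan is to exhibit a Killing-orthogonal witness $w\in (\nabla\mathcal{F}_a)_x^{\perp}$ with $\langle w,e_{-\alpha}\rangle\neq 0$. The candidate is $w:=h_\alpha-\alpha(y)e_\alpha$. Using $\alpha(x)=0$ together with the observation that $[e_\alpha,e_{-\beta}]=0$ for $\beta\in\Pi\setminus\{\alpha\}$ (since $\alpha-\beta$ is never a root), a direct bracket calculation yields $[e_\alpha,x+\lambda a]=\lambda(h_\alpha-\alpha(y)e_\alpha)=\lambda w$. Hence $w\in [\mathfrak{g},x+\lambda a]=\mathfrak{g}_{x+\lambda a}^{\perp}$ for every $\lambda\neq 0$, so $w\perp (\nabla\mathcal{F}_a)_x$. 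Since $\langle w,e_{-\alpha}\rangle=-\alpha(y)\langle e_\alpha,e_{-\alpha}\rangle\neq 0$, it follows that $e_{-\alpha}\notin (\nabla\mathcal{F}_a)_x$, which together with (i) completes the proof.

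The main obstacle is isolating $\C d$ itself rather than merely its class modulo $\ker\alpha$ in step (i): the dimension count produces $\mathcal{L}_0/\ker\alpha$ abstractly as a line, but singling out this line as $\C d$ requires the explicit first-order bracket computation above. As a sanity check between (i) and (ii), the expected identity $\langle w,d\rangle=0$ reduces to $\langle h_\alpha,h_\alpha\rangle=2\langle e_\alpha,e_{-\alpha}\rangle$, which is a standard consequence of the $\ad$-invariance of the Killing form.
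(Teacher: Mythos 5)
Your proposal is correct, but it reaches the conclusion by a genuinely different route than the paper. The paper first uses Panyushev's Lemma 1.3 to get that $(\nabla\mathcal F_a)_x\cap\lf_\alpha$ is exactly $\ell$-dimensional; together with $\ker(\alpha)=\nabla\mathcal I_x\subseteq(\nabla\mathcal F_a)_x$ this makes $(\nabla\mathcal F_a)_x\cap[\lf_\alpha,\lf_\alpha]$ a line, and it then shows $d$ lies on that line by writing a nonzero gradient in $[\lf_\alpha,\lf_\alpha]$ as a finite sum of centralizer vectors $y_i\in\g_{x+\lambda_i a}\subseteq\mathfrak{b}^-$ and using that the centralizer of $d$ in $[\lf_\alpha,\lf_\alpha]\cong\sln_2$ is $\C d$. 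You avoid the $\ell$-dimensionality input entirely: Lemma \ref{Lemma: Evaluation} traps the intersection in the plane $\g_{-\alpha}\oplus\C h_\alpha$, you produce $d$ inside the flat limit $\lim_{\lambda\to 0}\g_{x+\lambda a}$ via a first-order computation (your relation $c_1\alpha(y)=2c_2$ is precisely $v_0\in\C d$, since $d=e_{-\alpha}+\frac{1}{2}\alpha(y)h_\alpha$), and you exclude $e_{-\alpha}$ with the explicit witness $w=h_\alpha-\alpha(y)e_\alpha\in[\g,x+\lambda a]=\g_{x+\lambda a}^{\perp}$ for all $\lambda\neq 0$; only the spanning part of Panyushev's lemma and $\g_{x+\lambda a}\subseteq\mathfrak{b}^-$ are used. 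The paper's route buys a purely algebraic argument with no degeneration; yours buys the sharper byproduct $\lim_{\lambda\to 0}\g_{x+\lambda a}=\ker(\alpha)\oplus\C d\subseteq(\nabla\mathcal F_a)_x$ and independence from the dimension statement, at the cost of Grassmannian-limit machinery. Two points you should make explicit when writing this up: first, justify the limit — $\lambda\mapsto\g_{x+\lambda a}$ is a morphism $\C^{\ast}\to\mathrm{Gr}(\ell,\g)$ because $\ad_{x+\lambda a}$ has constant rank there, it extends over $\lambda=0$ by projectivity of the Grassmannian, and every vector of the limit is the value at $0$ of a local section of the pulled-back tautological bundle, which is what legitimizes your formal family $v(\lambda)$; second, the justification ``$\g_{-\alpha}\not\subseteq\mathrm{im}(\ad_x)$'' should be replaced by what you actually need, namely that $\mathrm{im}(\ad_x)=\bigoplus_{\beta\in\Delta\setminus\{\pm\alpha\}}\g_\beta$ has zero component in $\g_{-\alpha}$, so that $[v_0,a]=-[v_1,x]$ forces the $\g_{-\alpha}$-component of $[v_0,a]$ to vanish.
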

\begin{proof}
Lemma 1.3 of \cite{Panyushev2} tells us that $(\nabla \mathcal F_a)_x\cap\mathfrak{l}_{\alpha}$ is $\ell$-dimensional. We also know that the $(\ell-1)$-dimensional subspace $\mathrm{ker}(\alpha)$ belongs to $(\nabla \mathcal F_a)_x$, as follows from Lemma \ref{Lemma: SpanGradientsChi}(i). Since 
$$\mathfrak{l}_{\alpha} = [\mathfrak{l}_{\alpha},\mathfrak{l}_{\alpha}]\oplus\ker(\alpha),$$
these last two sentences force $(\nabla \mathcal F_a)_x\cap [\mathfrak{l}_\alpha,\mathfrak{l}_\alpha]$ to be one-dimensional. It therefore suffices to prove that $d\in(\nabla \mathcal F_a)_x$.

In light of the previous paragraph, there exists $f\in\mathcal{F}_a$ such that $\nabla f_x\in [\mathfrak{l}_{\alpha},\mathfrak{l}_{\alpha}]\setminus\{0\}$. Lemma 1.3 of \cite{Panyushev2} then implies the existence of $\lambda_1,\ldots,\lambda_k\in\mathbb{C}\setminus\{0\}$ satisfying the following conditions:
\begin{itemize}
\item $x+\lambda_i a\in\greg$ for all $i\in\{1,\ldots,k\}$;
\item $\nabla f_x=\sum_{i=1}^ky_i$ for some $y_1,\ldots,y_k\in\g$ with $y_i\in\mathfrak{g}_{x+\lambda_i a}$ for all $i\in\{1,\ldots,k\}$.
\end{itemize}  

Fix $i\in\{1,\ldots,k\}$ and note that $\g_{x+\lambda_i a}\subseteq\mathfrak{b}^{-}$ \cite[Corollary 2.12]{CrooksRoeserDocumenta}. We also note that  $$\mathfrak{b}^{-}=\mathfrak{g}_{-\alpha}\oplus[\mathfrak{g}_{-\alpha},\mathfrak{g}_{\alpha}]\oplus\mathrm{ker}(\alpha)\oplus\mathfrak{u}_{\alpha}^{-}.$$ One may therefore write 
$$y_i=\mu_i e_{-\alpha}+\nu_i h_{\alpha}+z_i$$ with $\mu_i,\nu_i\in\mathbb{C}$ and $z_i\in\mathrm{ker}(\alpha)\oplus\mathfrak{u}_{\alpha}^{-}$. Now observe that
\begin{align*}0 & = [x+\lambda_i a, y_i]\\ & =  [x+\lambda_i a, \mu_i e_{-\alpha}+\nu_i h_{\alpha}+z_i]\\ & = \underbrace{[\lambda_i d,\mu_ie_{-\alpha} + \nu_ih_\alpha]}_{\in [\mathfrak{l}_\alpha,\mathfrak{l}_\alpha]} + \underbrace{[x+\lambda_i a,z_i] + [\lambda_i(a-d), \mu_ie_{-\alpha} + \nu_ih_\alpha]}_{\in\uu_\alpha^-}.\end{align*}   
Since $\lambda_i\neq 0$, it follows that $\mu_ie_{-\alpha} + \nu_ih_\alpha$ lies in the $[\mathfrak{l}_{\alpha},\mathfrak{l}_{\alpha}]$-centralizer of $d$. This centralizer is precisely $\mathbb{C}d$, as $[\mathfrak{l}_{\alpha},\mathfrak{l}_{\alpha}]\cong\mathfrak{sl}_2$ and $d\neq 0$. On the other hand, it is clear that 
$$d = e_{-\alpha} + c h_\alpha$$
for some $c\in\mathbb{C}$. 
We conclude that
$$\mu_ie_{-\alpha} + \nu_ih_\alpha=\mu_id,$$
or equivalently 
$$y_i = \mu_i d + z_i.$$
	
The considerations discussed in the previous paragraph yield
$$\nabla f_x=\sum_{i=1}^ky_i=\left(\sum_{i=1}^k\mu_i\right)d+\sum_{i=1}^kz_i.$$ 
This combines with fact that $\nabla f_x\in [\mathfrak{l}_{\alpha},\mathfrak{l}_{\alpha}]\setminus\{0\}$ to imply that
$$\nabla f_x=\left(\sum_{i=1}^k\mu_i\right)d\quad\text{and}\quad\sum_{i=1}^k\mu_i\neq 0.$$ 
It follows that $d\in(\nabla \mathcal F_a)_x$, as desired.  
\end{proof} 

\begin{prop}\label{Proposition: Main}
If $x\in\mathrm{ker}(\alpha)^{\circ}$, then the differential of $F_a\big\vert_{\gsing^a}:\gsing^a\longrightarrow\mathrm{Spec}(\mathcal{F}_a)$ has rank $b-1$ at $x$. 
\end{prop}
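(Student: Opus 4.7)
My plan is to exploit the duality
$$\rank\bigl(dF_a\big|_{T_x(\gsing^a)}\bigr) = \dim T_x(\gsing^a) - \dim\bigl(T_x(\gsing^a)\cap(\nabla\mathcal{F}_a)_x^\perp\bigr).$$
By Lemma \ref{Lemma: Smooth point}, $\dim T_x(\gsing^a)=2u+\ell-2$, so the conclusion $\rank=b-1=u+\ell-1$ is equivalent to the kernel $T_x(\gsing^a)\cap(\nabla\mathcal{F}_a)_x^\perp$ having dimension exactly $u-1$.

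The inclusion $Gx\subseteq\gsing^a$ supplies $T_xGx=\mathfrak{u}_\alpha^-\oplus\mathfrak{u}_\alpha^+\subseteq T_x(\gsing^a)$, and Lemma \ref{Lemma: Rank} tells us that $dF_a|_{T_xGx}$ has rank $u-1$. Consequently $T_xGx\cap(\nabla\mathcal{F}_a)_x^\perp$ already has dimension $u-1$, and it will be enough to prove the reverse inclusion
$$T_x(\gsing^a)\cap(\nabla\mathcal{F}_a)_x^\perp \subseteq T_xGx.$$

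To establish this, I would take $v = c_1 a + n_- + h + n_+\in T_x(\gsing^a)\cap(\nabla\mathcal{F}_a)_x^\perp$ (with $c_1\in\mathbb{C}$, $n_\pm\in\mathfrak{u}_\alpha^\pm$, and $h\in\mathrm{ker}(\alpha)$) and show $c_1=0$ and $h=0$. Two subspaces of $(\nabla\mathcal{F}_a)_x$ are pitted against $v$: the subspace $\mathrm{ker}(\alpha)$, furnished by Lemma \ref{Lemma: SpanGradientsChi}(i) together with $\mathcal{I}\subseteq\mathcal{F}_a$; and the line $\mathbb{C}d$, furnished by Lemma \ref{Lemma: SpanGradientsFa}, where a short direct computation using \eqref{Equation: Triple decomposition} and $a=\xi+y$ identifies $d$ as a nonzero scalar multiple of $e_{-\alpha}+\tfrac{1}{2}\alpha(y)h_\alpha$. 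The pairing $\langle v,\mathrm{ker}(\alpha)\rangle=0$, combined with $\xi,n_\pm\perp\mathfrak{h}$, collapses to the statement $c_1 y + h\in\mathrm{ker}(\alpha)^\perp\cap\mathfrak{h}=\mathbb{C}h_\alpha$. The pairing $\langle v, d\rangle=0$, combined with the vanishing of all Killing pairings of $n_\pm, h$ against $\mathfrak{g}_{-\alpha}\oplus\mathbb{C}h_\alpha$, collapses to $c_1\langle a, d\rangle=0$; a short root-space calculation shows that $\langle a,d\rangle$ is a nonzero scalar multiple of $\alpha(y)^2$, and since $\alpha(y)\neq 0$ this forces $c_1=0$. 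Applying $\alpha$ to $h\in\mathbb{C}h_\alpha\cap\mathrm{ker}(\alpha)$ yields $h=0$, placing $v$ in $T_xGx$.

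The hardest part I anticipate is the bookkeeping required to identify $d$ with enough precision that $\langle a,d\rangle$ is manifestly nonzero---this is where the standing assumption $\alpha(y)\neq 0$ finally gets used in a decisive way---and to keep careful track of which Killing pairings of root vectors vanish. No single step is deep, but the orchestration of these calculations has to be done with care.
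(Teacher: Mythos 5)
Your argument is correct, and it reaches the conclusion by a genuinely different organization of the linear algebra than the paper's, while drawing on the same key ingredients (Lemmas \ref{Lemma: Smooth point}, \ref{Lemma: Rank}, \ref{Lemma: SpanGradientsChi}, and crucially \ref{Lemma: SpanGradientsFa}). The paper coordinatizes $F_a=(\chi,\widetilde{F_a})$ and computes the rank of a block matrix: Lemma \ref{Lemma: Evaluation} kills the $\uu_\alpha^-$ block of $(d\widetilde{F_a})_x$, Lemma \ref{Lemma: Rank} makes the $\uu_\alpha^+$ block an invertible $(u-1)\times(u-1)$ block after a row operation that isolates $df_x$ with $\nabla f_x=d$, and the remaining $\ell$ units of rank are verified separately by showing $d\chi_x\big\vert_{\C a\oplus\ker(\alpha)}$ has rank $\ell-1$ (via the shear $\psi$ onto $\h$ and Lemma \ref{Lemma: SpanGradientsChi}(ii)) and that $df_x$ is independent of the $(df_i)_x$ there (tested against $\xi+ch_\alpha$). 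You instead run the dual, kernel-side computation: since $\ker((dF_a)_x)=(\nabla\mathcal{F}_a)_x^{\perp}$, the statement reduces to the inclusion $T_x(\gsing^a)\cap(\nabla\mathcal{F}_a)_x^{\perp}\subseteq T_xGx$, which you obtain by pairing a general $v=c_1a+n_-+h+n_+$ against only two subspaces of $(\nabla\mathcal{F}_a)_x$, namely $\ker(\alpha)$ and $\C d$. Your computations check out: with $d=e_{-\alpha}+\tfrac{1}{2}\alpha(y)h_\alpha$ one gets $\langle v,\ker(\alpha)\rangle=0\Rightarrow c_1y+h\in\ker(\alpha)^{\perp}\cap\h=\C h_\alpha$, and $\langle v,d\rangle=c_1\langle a,d\rangle=\tfrac{c_1}{4}\alpha(y)^2\langle h_\alpha,h_\alpha\rangle$, which is the same nonzero quantity $\langle d,d\rangle$ that the paper uses, so $c_1=0$ and then $h=0$. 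What your route buys is economy: it needs neither Lemma \ref{Lemma: Evaluation}, nor the coordinatized matrix and row reduction, nor the separate rank-$(\ell-1)$ verification for $\chi$, because the full dimension count is delegated to Lemma \ref{Lemma: Rank} via $\dim T_xGx=2(u-1)$; what the paper's route buys is an explicit accounting of which $\ell-1$ directions of rank come from the invariants $\mathcal{I}$ and which single direction comes from $\mathcal{F}_a\setminus\mathcal{I}$, which is conceptually transparent but requires more bookkeeping. In a final write-up you should record the two standard facts you use implicitly, $\ker(\alpha)^{\perp}\cap\h=[\g_{-\alpha},\g_{\alpha}]=\C h_\alpha$ and $\langle h_\alpha,h_\alpha\rangle\neq 0$, both of which the paper also relies on.
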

\begin{proof}
We shall work with the coordinalized versions \eqref{Equation: Coordinatized adjoint quotient} and \eqref{Equation: Coordinatized MF} of $\chi$ and $F_a$, respectively. It follows that
$$F_a=(\chi,\widetilde{F_a}):\g\longrightarrow\mathbb{C}^{\ell}\oplus\mathbb{C}^{u},$$
where $\widetilde{F_a}:=(f_{\ell+1},\ldots,f_b):\g\longrightarrow\mathbb{C}^{u}$. Now write $\phi_a:\gsing^a\longrightarrow\mathbb{C}^b$ for the restriction of $F_a$ to $\gsing^a$.
Lemma \ref{Lemma: Smooth point} allows us to present $(d\phi_a)_x$ as a linear map
$$(d\phi_a)_x:\mathfrak{u}_{\alpha}^{-}\oplus\mathfrak{u}_{\alpha}^+\oplus (\C a \oplus \ker(\alpha))\longrightarrow\mathbb{C}^{\ell}\oplus\mathbb{C}^{u}.$$ Note that the direct sum decompositions of the domain and codomain give rise to the matrix representation 
\[
(d\varphi_a)_x = \begin{pmatrix}
d\chi_x\big\vert_{\uu_{\alpha}^{-}} & d\chi_x\big\vert_{\uu_{\alpha}^{+}} & d\chi_x\big\vert_{\C a \oplus \ker(\alpha)}  \\ (d\widetilde{F_a})_x\big\vert_{\uu_{\alpha}^{-}} & (d\widetilde{F_a})_x\big\vert_{\uu_{\alpha}^{+}} & (d\widetilde{F_a})_x\big\vert_{\C a \oplus \ker(\alpha)} 
\end{pmatrix}.
\]
of $(d\phi_a)_x$.

Now consider the tangent space
\begin{equation}\label{Equation: Tangent space decomposition}T_x(Gx)=[\g,x]=\mathfrak{u}_{\alpha}^{-}\oplus\oplus\mathfrak{u}_{\alpha}^+,\end{equation} and note that the adjoint quotient map is constant-valued on $Gx$. We conclude that
$$d\chi_x\big\vert_{\uu_{\alpha}^-}=0\quad\text{and}\quad d\chi_x\big\vert_{\uu_{\alpha}^+}=0.$$ On the other hand, Lemma \ref{Lemma: Evaluation} tells us that \begin{equation}\label{Equation: Useful identity}(d\widetilde{F_a})_x\big\vert_{\uu_{\alpha}^{-}}=0.\end{equation} These considerations force our matrix representation to take the form
\begin{equation}\label{Equation: Matrix representative}
(d\varphi_a)_x = \begin{pmatrix}
0 & 0 & d\chi_x\big\vert_{\C a \oplus \ker(\alpha)}  \\ 0 & (d\widetilde{F_a})_x\big\vert_{\uu_{\alpha}^{+}} & (d\widetilde{F_a})_x\big\vert_{\C a \oplus \ker(\alpha)} 
\end{pmatrix}.
\end{equation}

We again let $d$ denote the projection of $a$ onto $[\lf_\alpha,\lf_\alpha]$ with respect to \eqref{Equation: Triple decomposition}. Lemma \ref{Lemma: SpanGradientsFa} implies that $d\in (\nabla\mathcal{F}_a)_x$, and one clearly has
$$(\nabla\mathcal{F}_a)_x=\mathrm{span}\{(\nabla f_1)_x,\ldots,(\nabla f_b)_x\}.$$
We may therefore choose $c_1,\ldots,c_b\in\mathbb{C}$ so that $$f := \sum_{i=1}^b c_if_i\in\mathcal{F}_a$$ satisfies $\nabla f_x=d$. Since $d\in \g_x$, we have $\g_x^\perp \subseteq \ker(df_x).$ It follows that $$df_x(\uu_\alpha^+)=\{0\}.$$

Now recall that $$\mathrm{ker}(\alpha)=\nabla\mathcal I_x = \mathrm{span}\{(\nabla f_1)_x,\dots,(\nabla f_\ell)_x\}$$ by Lemma \ref{Lemma: SpanGradientsChi}(i), and observe that $d=\nabla f_x\not\in\mathrm{ker}(\alpha)$. These considerations imply that $c_j\neq 0$ for some $j>\ell$. By relabeling the generators $f_1,\ldots,f_b$ if necessary, we may assme that $c_{\ell+1}\neq 0$. We may therefore perform finitely many row operations on \eqref{Equation: Matrix representative} to replace the $(\ell+1)^{\text{st}}$ row with $df_x$. We also know $(d\widetilde{F_a})_x\big\vert_{\uu_{\alpha}^{+}}$ to have rank $u-1$ by \eqref{Equation: Tangent space decomposition}, \eqref{Equation: Useful identity}, and Lemma \ref{Lemma: Rank}, and have explained that $df_x(\uu_\alpha^+)=\{0\}$. These last two sentences force our new matrix representative to take the form 
\[
\begin{pmatrix}
0 & 0 & d\chi_x\big\vert_{\C a \oplus \ker(\alpha)}\\ 0 & 0 & df_x\big\vert_{\C a \oplus \ker(\alpha)}\\ 0 & A & * 
\end{pmatrix}
\]
for some invertible $(u-1)\times(u-1)$ block $A$. It will therefore suffice to prove that the block 
\[
\begin{pmatrix}
d\chi_x\big\vert_{\C a \oplus \ker(\alpha)}\\ df_x\big\vert_{\C a \oplus \ker(\alpha)}
\end{pmatrix}
\]
has rank $\ell$. Our proof will consist of verifying the following two assertions:
\begin{itemize}
\item $d\chi_x\big\vert_{\C a \oplus \ker(\alpha)}$ has rank $\ell-1$;
\item $df_x\big\vert_{\C a \oplus \ker(\alpha)}$ is not in the span of $(df_1)_x\big\vert_{\C a \oplus \ker(\alpha)},\dots,(df_\ell)_x\big\vert_{\C a \oplus \ker(\alpha)}$.
\end{itemize}   

To address the first assertion, recall that $a=\xi+y$ with $y\in\h\setminus\mathrm{ker}(\alpha)$. We may therefore define a vector space isomorphism as $$\psi:\C a\oplus\ker(\alpha)\longrightarrow \h, \quad \lambda a + w\mapsto \lambda y+w$$ for all $\lambda\in\C$ and $w\in\ker(\alpha)$. This amounts to the formula
$$\psi(\lambda a+w)=(\lambda a+w)-\lambda\xi$$ for all $\lambda\in\mathbb{C}$ and $w\in\mathrm{ker}(\alpha)$. Since $\xi\in\uu^-$, one readily deduces that
$$\chi(\psi(x))=\chi(x)\quad\text{and}\quad F_a(\psi(x))=F_a(x)$$ for all $x\in\mathbb{C}a\oplus\mathrm{ker}(\alpha)$ \cite[Lemma 11]{KostantLie}. It follows that the rank of $d\chi_x\big\vert_{\C a \oplus \ker(\alpha)}$ is equal to the rank of $d\chi_x\big\vert_{\h}$, and the latter equals $\ell-1$ by Lemma \ref{Lemma: SpanGradientsChi}(ii).

It remains only to prove that $df_x\big\vert_{\C a \oplus \ker(\alpha)}$ is not in the span of $(df_1)_x\big\vert_{\C a \oplus \ker(\alpha)},\dots,(df_\ell)_x\big\vert_{\C a \oplus \ker(\alpha)}$. To this end, write $y=ch_{\alpha}+\tilde{h}$ for some $c\in\mathbb{C}\setminus\{0\}$ and $\tilde{h}\in\mathrm{ker}(\alpha)$, so that $d = e_{-\alpha} + c h_\alpha$. It is then clear that
$$\xi-e_{-\alpha} + d\in\mathbb{C}a\oplus\mathrm{ker}(\alpha).$$
We also have $$\xi-e_{-\alpha} + d = \xi + ch_\alpha\in\ker(\alpha)^\perp,$$ while
Lemma \ref{Lemma: SpanGradientsChi}(ii) implies that
$$\ker\left((d\chi_x)\big\vert_{\C a \oplus \ker(\alpha)}\right) = \ker(\alpha)^\perp\cap (\C a\oplus\ker(\alpha)).$$
The previous two sentences imply that
$$d\chi_x\big\vert_{\C a \oplus \ker(\alpha)}(\xi-e_{-\alpha}+d) =0,$$
or equivalently that
$$(df_i)_x\big\vert_{\C a \oplus \ker(\alpha)}(\xi-e_{-\alpha}+d)=0$$ for all $i\in\{1,\ldots,\ell\}$.
On the other hand, 
$$df_x(\xi-e_{-\alpha}+d) = \langle \nabla f_x,\xi-e_{-\alpha}+d\rangle = \langle d, d + (\xi-e_{-\alpha})\rangle = \langle d,d\rangle = c^2\langle h_\alpha,h_\alpha\rangle \neq 0.$$
It follows that $df_x\big\vert_{\C a \oplus \ker(\alpha)}$ is not in the span of $(df_1)_x\big\vert_{\C a \oplus \ker(\alpha)},\dots,(df_\ell)_x\big\vert_{\C a \oplus \ker(\alpha)}$. 
\end{proof}
 
Lemma \ref{Lemma: Smooth point} and Proposition \ref{Proposition: Main} allow us to apply Lemma \ref{Lemma: KeyLemma} and finish the proof of Theorem \ref{Thm: Main Theorem}(i). 

\subsection{The proof of Theorem \ref{Thm: Main Theorem}(ii)}\label{Subsection: Proof ii}
Section 6.1 of \cite{CrooksRoeserDocumenta} explains that $\Sigma_a$ is a singleton if $\g=\mathfrak{sl}_2$ and $a\in\greg$ is nilpotent. One immediate consequence is that $\Sigma_a$ has codimension two in $\mathrm{Spec}(\mathcal{F}_a)\cong\mathbb{C}^2$. In what follows, we establish that $\Sigma_a\subseteq\mathrm{Spec}(\mathcal{F}_a)$ has codimension one if $\g=\mathfrak{sl}_3$ and $a\in\greg$ is nilpotent. The assertions in the previous two sentences then combine with \cite[Proposition 5.2]{CrooksRoeserDocumenta} to imply Theorem \ref{Thm: Main Theorem}(ii).

In what follows, we replace the Killing form on $\mathfrak{sl}_3$ with the trace form, i.e. $$\langle x,y\rangle:=\mathrm{tr}(xy),\quad x,y\in\mathfrak{sl}_3.$$
Now consider the standard Cartan subalgebra $\h\subseteq\mathfrak{sl}_3$ of diagonal matrices, as well as the standard Borel subalgebra $\mathfrak{b}\subseteq\mathfrak{sl}_3$ of upper-triangular matrices. The resulting simple roots $\alpha,\beta\in\mathfrak{h}^*$ are then specified by their corresponding root spaces
$$(\mathfrak{sl}_3)_{\alpha}=\mathrm{span}\left\{e_{\alpha}:=\begin{pmatrix}
0 & 1 & 0 \\ 0 & 0 & 0 \\ 0 & 0 & 0
\end{pmatrix} \right\}\quad\text{and}\quad(\mathfrak{sl}_3)_{\beta}=\mathrm{span}\left\{e_{\beta}:=\begin{pmatrix}
0 & 0 & 0 \\ 0 & 0 & 1 \\ 0 & 0 & 0
\end{pmatrix} \right\}.$$
It follows that
$$(\mathfrak{sl}_3)_{-\alpha}=\mathrm{span}\left\{e_{-\alpha}:=\begin{pmatrix}
0 & 0 & 0 \\ 1 & 0 & 0 \\ 0 & 0 & 0
\end{pmatrix} \right\}\quad\text{and}\quad(\mathfrak{sl}_3)_{-\beta}=\mathrm{span}\left\{e_{-\beta}:=\begin{pmatrix}
0 & 0 & 0 \\ 0 & 0 & 0 \\ 0 & 1 & 0
\end{pmatrix} \right\}.$$
Extensive use will be made of the Levi subalgebra $$\lf_\alpha=(\mathfrak{sl}_3)_{-\alpha}\oplus\h\oplus(\mathfrak{sl}_3)_{\alpha}$$ and its centre \begin{equation}\label{Equation: Centre}\mathfrak{z}(\lf_\alpha)=\mathrm{ker}(\alpha)=\mathrm{span}\left\{\begin{pmatrix}
1 & 0 & 0 \\ 0 & 1 & 0 \\ 0 & 0 & -2
\end{pmatrix} \right\}.\end{equation}

\begin{lem}\label{Lemma: xsmoothsing}    
The element $x= e_\alpha$ belongs to the smooth locus of $(\sln_3)_{\text{sing}}$ and satisfies 
\[
T_x((\mathfrak{sl}_3)_{\text{sing}})  = [\mathfrak{sl}_3,x] \oplus \mathfrak{z}(\lf_\alpha). 
\]

\end{lem}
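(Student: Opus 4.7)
My approach is to use the Slodowy slice at $e_\alpha$ to reduce both claims---smoothness of $(\sln_3)_{\text{sing}}$ at $e_\alpha$ and the tangent-space formula---to a concrete calculation in a four-dimensional affine space. Let $\mathcal{S} := e_\alpha + (\sln_3)_{e_{-\alpha}}$. Since $(e_\alpha, h_\alpha, e_{-\alpha})$ is an $\sln_2$-triple, standard $\sln_2$-representation theory applied to $\sln_3$ yields $\sln_3 = [\sln_3, e_\alpha] \oplus (\sln_3)_{e_{-\alpha}}$; consequently, the action map $\Psi \colon G \times \mathcal{S} \to \sln_3$, $(g, s) \mapsto \Ad_g(s)$, is a submersion at $(\mathrm{id}, e_\alpha)$. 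Because $(\sln_3)_{\text{sing}}$ is $G$-invariant, $\Psi^{-1}((\sln_3)_{\text{sing}}) = G \times (\mathcal{S} \cap (\sln_3)_{\text{sing}})$; this reduces smoothness of $(\sln_3)_{\text{sing}}$ at $e_\alpha$ to smoothness of $\mathcal{S} \cap (\sln_3)_{\text{sing}}$ at $e_\alpha$, and gives $T_{e_\alpha}((\sln_3)_{\text{sing}}) = d\Psi_{(\mathrm{id}, e_\alpha)}\bigl(\sln_3 \oplus T_{e_\alpha}(\mathcal{S} \cap (\sln_3)_{\text{sing}})\bigr)$ with $d\Psi_{(\mathrm{id}, e_\alpha)}(v, w) = [v, e_\alpha] + w$.

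Next, I will identify $\mathcal{S} \cap (\sln_3)_{\text{sing}}$ explicitly. Writing slice elements as $M(a, b, c, d) := e_\alpha + a z + b e_{-\alpha} + c e_\beta + d e_{-\alpha - \beta}$---where $z := \mathrm{diag}(1, 1, -2)$ spans $\mathfrak{z}(\lf_\alpha)$ and $\{z, e_{-\alpha}, e_\beta, e_{-\alpha - \beta}\}$ is the standard basis of $(\sln_3)_{e_{-\alpha}}$---the condition $M \in (\sln_3)_{\text{sing}}$ amounts to $M - rI$ having rank at most one for the repeated eigenvalue $r$ of $M$. The second column of $M - rI$ has a distinguished nonzero entry $1$ in its top slot, forcing the first and third columns of $M - rI$ to be scalar multiples of the second; a brief calculation then yields $r = -2a$, $c = d = 0$, and $b = 9a^2$. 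Consequently, $\mathcal{S} \cap (\sln_3)_{\text{sing}} = \{e_\alpha + a z + 9 a^2 e_{-\alpha} : a \in \C\}$ is a smooth affine line with tangent $\C z$ at $e_\alpha$.

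Assembling the ingredients then gives both smoothness of $(\sln_3)_{\text{sing}}$ at $e_\alpha$ and the identification $T_{e_\alpha}((\sln_3)_{\text{sing}}) = [\sln_3, e_\alpha] + \C z$. A direct computation shows $[\sln_3, e_\alpha] = \mathrm{span}\{h_\alpha, e_\alpha, e_{\alpha + \beta}, e_{-\beta}\}$, which does not contain $z = h_\alpha + 2 h_\beta$; the sum is therefore direct, yielding $T_{e_\alpha}((\sln_3)_{\text{sing}}) = [\sln_3, e_\alpha] \oplus \mathfrak{z}(\lf_\alpha)$ as claimed. I expect the main technical obstacle to be the rank-one enumeration: a priori all nine $2 \times 2$ minors of $M - rI$ contribute conditions, but the distinguished nonzero entry in the second column reduces the analysis to a short columnar argument.
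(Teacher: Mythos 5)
Your proposal is correct, but it proceeds by a genuinely different route than the paper. The paper works with the global defining equations of $(\sln_3)_{\text{sing}}$: using the generators $f_1(y)=\tr(y^2)$, $f_2(y)=\tr(y^3)$, it characterizes $(\sln_3)_{\text{sing}}$ as the locus where $y\wedge\bigl(y^2-\tfrac{1}{3}\tr(y^2)\mathrm{I}_3\bigr)=0$, linearizes these equations at $x=e_\alpha$ to exhibit the Zariski tangent space as an explicit five-dimensional space of matrices, and then invokes Popov's result that $(\sln_3)_{\text{sing}}$ is irreducible of codimension three to conclude smoothness before matching the tangent space with $[\sln_3,x]\oplus\mathfrak{z}(\lf_\alpha)$. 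You instead exploit the transverse (Slodowy) slice $\mathcal{S}=e_\alpha+(\sln_3)_{e_{-\alpha}}$ attached to the $\sln_2$-triple $(e_\alpha,h_\alpha,e_{-\alpha})$: the decomposition $\sln_3=[\sln_3,e_\alpha]\oplus(\sln_3)_{e_{-\alpha}}$ makes the action map a submersion at $(\mathrm{id},e_\alpha)$, and $G$-invariance reduces both claims to identifying $\mathcal{S}\cap(\sln_3)_{\text{sing}}$; your rank-one analysis of $M(a,b,c,d)-r\mathrm{I}$ is correct (the second column forces $r=-2a$, $c=d=0$, $b=9a^2$), so the slice intersection is the smooth parabola $\{e_\alpha+az+9a^2e_{-\alpha}\}$ with tangent $\C z=\mathfrak{z}(\lf_\alpha)$ at $e_\alpha$, and your final check that $z\notin[\sln_3,e_\alpha]=\mathrm{span}\{h_\alpha,e_\alpha,e_{\alpha+\beta},e_{-\beta}\}$ agrees with the paper's computation of $[\sln_3,x]$. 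What your approach buys is independence from Popov's irreducibility/codimension-three input (the dimension of $(\sln_3)_{\text{sing}}$ at $x$ falls out of the slice computation), and a four-variable calculation in place of an eight-variable one; what it costs is reliance on the standard transverse-slice formalism, namely descent of smoothness and of tangent spaces along the submersion $\Psi$, which you assert rather than prove but which is indeed standard and could be cited (e.g. Slodowy's treatment of transverse slices) or justified in a sentence via the local product structure.
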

\begin{proof}
It follows from Proposition 3.6 and Remark 3.7 in \cite{Popov} that $(\sln_3)_{\text{sing}}$ is irreducible and has codimension three in $\mathfrak{sl}_3$. We are thereby reduced to proving that $T_x((\mathfrak{sl}_3)_{\text{sing}})  = [\mathfrak{sl}_3,x] \oplus \mathfrak{z}(\lf_\alpha)$, and that the right-hand side has codimension three in $\mathfrak{sl}_3$.

The polynomials $f_1,f_2:\mathfrak{sl}_3\longrightarrow\mathbb{C}$ defined by
$$f_1(y)=\tr(y^2)\quad\text{and}\quad f_2(y)=\tr(y^3)$$ are algebraically independent generators of the subalgebra $\mathcal{I}\subseteq\mathbb{C}[\mathfrak{sl}_3]$. It follows that
\[
(\mathfrak{sl}_3)_{\text{sing}} = \{y\in \sln_3: (df_1)_y\wedge(df_2)_y = 0\},
\]
e.g. by \cite[Theorem 9]{KostantLie}.
On the other hand, 
\[
(df_1)_y(z) = 2\tr(yz) \quad\text{and}\quad (df_2)_y(z) = 3\tr(y^2z)
\] 
for all $y,z\in\mathfrak{sl}_3$.
These amount to the statements 
\[
(\nabla f_1)_y = 2y \quad\text{and}\quad (\nabla f_2)_y = 3y^2 - \tr(y^2)\mathrm{I}_3
\]
for all $y\in\mathfrak{sl}_3$, where $\mathrm{I}_3$ is the $3\times 3$ identity matrix.
We conclude that
\[
(\mathfrak{sl}_3)_{\text{sing}} = \{y\in\sln_3: y\wedge(y^2 - \frac{1}{3}\tr(y^2)\mathrm{I}_3) = 0\},
\] which combines with the identity $x^2=0$ to give
\begin{equation}\label{Equation: Defining}
T_x((\mathfrak{sl}_3)_{\text{sing}}) = \{z\in\sln_3\colon x\wedge(xz+zx-\frac{2}{3}\tr(zx)\mathrm{I}_3) = 0\}.
\end{equation}
Now suppose that $z\in\mathfrak{sl}_3$ and write 
\[
z = \begin{pmatrix}
z_1 & z_2 & z_3 \\ z_4 & z_5 & z_6 \\ z_7 & z_8 & -(z_1+z_5)
\end{pmatrix}, \qquad z_1,\dots,z_8\in\C. 
\]
Note that the trace-free part of $xz+zx$ is
\[
xz + zx - \frac{2}{3}\tr(xz)\mathrm{I}_3 = \begin{pmatrix}
\tfrac{1}{3}z_4 & z_5+z_1 & z_6 \\ 0 & \tfrac{1}{3}z_4 & 0 \\ 0 & z_7 & -\frac{2}{3}z_4
\end{pmatrix}.
\]
By \eqref{Equation: Defining}, $z\in T_x((\mathfrak{sl}_3)_{\text{sing}})$ if and only if $x$ and this trace-free part are linearly dependent. It is now straightforward to verify that
\begin{equation}\label{Equation: Tangent presentation}
T_x((\mathfrak{sl}_3)_{\text{sing}}) = \left\{ \begin{pmatrix}
z_1 & z_2 & z_3 \\ 0 & z_5 & 0 \\ 0 & z_8 & -(z_1+z_5)
\end{pmatrix}: z_1,z_2,z_3,z_5,z_8\in\C  \right\}.
\end{equation}
At the same time, one readily checks that
\begin{equation}\label{Eq: xsl3}
[\sln_3,x] = \mathrm{span}\left\{\begin{pmatrix}
1 & 0 & 0 \\ 0 & -1 & 0 \\ 0 & 0 & 0
\end{pmatrix}, \begin{pmatrix}
0& 1 & 0 \\ 0 & 0 & 0 \\ 0 & 0 & 0
\end{pmatrix}, \begin{pmatrix}
0& 0& 1 \\ 0 & 0 & 0 \\ 0 & 0 & 0
\end{pmatrix} , \begin{pmatrix}
0& 0 & 0 \\ 0 & 0 & 0 \\ 0 & 1 & 0
\end{pmatrix} \right\}.
\end{equation}
The desired result now follows easily from \eqref{Equation: Centre}, \eqref{Equation: Tangent presentation}, and \eqref{Eq: xsl3}.  
\end{proof}

Now consider the nilpotent element  
\[
\xi := e_{-\alpha} + e_{-\beta} =  \begin{pmatrix}
0 & 0 & 0 \\ 1 & 0 & 0 \\ 0 & 1 & 0
\end{pmatrix}\in(\mathfrak{sl}_3)_{\text{reg}},
\]
and recall that the critical points of $F_{\xi}:\mathfrak{sl}_3\longrightarrow\mathrm{Spec}(\mathcal{F}_{\xi})$ are given by
$$(\mathfrak{sl}_3)_{\text{sing}}^{\xi}=(\mathfrak{sl}_3)_{\text{sing}}+\mathbb{C}\xi\subseteq\mathfrak{sl}_3.$$ 
Let us also note that $\xi\not\in T_x((\mathfrak{sl}_3)_{\text{sing}})$ by \eqref{Equation: Tangent presentation}. We thus have the following immediate consequence of Lemma \ref{Lemma: xsmoothsing}.

\begin{cor}\label{Cor: xsmoothsingxi}
The element $x=e_\alpha$ belongs to the smooth locus of $(\sln_3)_{\text{sing}}^\xi$ and  satisfies 
\[
T_x((\mathfrak{sl}_3)_{\text{sing}}^{\xi}) = [\mathfrak{sl}_3,x] \oplus \mathfrak{z}(\lf_\alpha) \oplus \C\xi. 
\]
\end{cor}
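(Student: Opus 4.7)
The plan is to imitate the argument already used in the proof of Lemma \ref{Lemma: Smooth point}, treating $(\sln_3)_{\text{sing}}^{\xi} = (\sln_3)_{\text{sing}} + \C\xi$ as a ``cylinder'' over $(\sln_3)_{\text{sing}}$ in the $\C\xi$-direction and exploiting transversality at $x = e_\alpha$.

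First, I would invoke Lemma \ref{Lemma: xsmoothsing} to conclude that $x$ is a smooth point of $(\sln_3)_{\text{sing}}$ with
\[
T_x((\sln_3)_{\text{sing}}) \;=\; [\sln_3, x] \,\oplus\, \mathfrak{z}(\lf_\alpha).
\]
Next, I would use the explicit matrix description of this tangent space recorded in \eqref{Equation: Tangent presentation} together with the explicit form $\xi = e_{-\alpha} + e_{-\beta}$ to observe that $\xi \notin T_x((\sln_3)_{\text{sing}})$: indeed, every matrix in the right-hand side of \eqref{Equation: Tangent presentation} has a zero $(2,1)$-entry, whereas the $(2,1)$-entry of $\xi$ equals one. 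This is the key transversality input, and it is precisely the observation flagged in the sentence immediately preceding the corollary.

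With transversality in hand, I would argue that the affine map
\[
(\sln_3)_{\text{sing}} \times \C \longrightarrow (\sln_3)_{\text{sing}}^{\xi}, \qquad (y,\lambda) \longmapsto y + \lambda\xi,
\]
is a local isomorphism near $(x,0)$: its differential at $(x,0)$ is the sum map $T_x((\sln_3)_{\text{sing}}) \oplus \C\xi \to \sln_3$, which is injective by the transversality observation, and its image visibly contains an open neighbourhood of $x$ in $(\sln_3)_{\text{sing}}^{\xi}$. It follows that $x$ is a smooth point of $(\sln_3)_{\text{sing}}^{\xi}$ and that
\[
T_x\bigl((\sln_3)_{\text{sing}}^{\xi}\bigr) \;=\; T_x\bigl((\sln_3)_{\text{sing}}\bigr) \oplus \C\xi \;=\; [\sln_3,x] \oplus \mathfrak{z}(\lf_\alpha) \oplus \C\xi,
\]
which is the claim.

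The argument is essentially a verification rather than a new idea; the only non-routine ingredient is confirming $\xi \notin T_x((\sln_3)_{\text{sing}})$, and this reduces to reading off a single matrix entry from \eqref{Equation: Tangent presentation}. Everything else follows the template of Lemma \ref{Lemma: Smooth point}, so I do not anticipate a genuine obstacle beyond this inspection.
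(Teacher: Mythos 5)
Your proposal is correct and follows essentially the same route as the paper: the paper likewise observes that $\xi\notin T_x((\sln_3)_{\text{sing}})$ by reading off \eqref{Equation: Tangent presentation} and then deduces the corollary immediately from Lemma \ref{Lemma: xsmoothsing}, via the same transversality argument used in Lemma \ref{Lemma: Smooth point}. Your explicit check of the $(2,1)$-entry and the parametrization $(y,\lambda)\mapsto y+\lambda\xi$ merely spell out what the paper leaves implicit.
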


In keeping with \eqref{Equation: Coordinatized MF} and the discussion preceding it, $F_{\xi}:\mathfrak{sl}_3\longrightarrow\mathrm{Spec}(\mathcal{F}_{\xi})$ may be coordinatized as 
\[ F_{\xi}:\mathfrak{sl}_3\longrightarrow\mathbb{C}^5,\quad
y\mapsto (\tr(y^2), \tr(y^3), 2\tr(\xi y), 3\tr(\xi^2y), 3\tr(\xi y^2)),\quad y\in\mathfrak{sl}_3.
\]
It follows that
\begin{equation}\label{Eq: DFxi}
(dF_\xi)_y(z) = (2\tr(yz), 3\tr(zy^2), 2\tr(\xi z), 3\tr(\xi^2z), 3\tr((\xi y + y\xi)z)).
\end{equation}
for all $y,z\in\mathfrak{sl}_3$.

\begin{prop}\label{Proposition: Qmain}
If $x=e_{\alpha}$, then the differential of $F_\xi\big\vert_{(\mathfrak{sl}_3)_{\mathrm{sing}}^{\xi}}:(\mathfrak{sl}_3)_{\mathrm{sing}}^{\xi}\longrightarrow\mathrm{Spec}(\mathcal{F}_\xi)$ has rank four at $x$. 
\end{prop}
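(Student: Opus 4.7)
The plan is to exploit the explicit formula \eqref{Eq: DFxi} together with the tangent space description
\[
T_x\bigl((\sln_3)_{\text{sing}}^{\xi}\bigr) = [\sln_3,x] \oplus \mathfrak{z}(\lf_\alpha) \oplus \C\xi
\]
from Corollary \ref{Cor: xsmoothsingxi}, reducing the entire question to the evaluation of a handful of traces.

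The first step is to secure the upper bound on the rank. The key observation is that $x = e_\alpha$ satisfies $x^2 = 0$, so the second component $3\tr(z x^2)$ of $(dF_\xi)_x(z)$ in \eqref{Eq: DFxi} vanishes identically in $z$. The image of $(dF_\xi)_x$---and hence also that of its restriction to $T_x((\sln_3)_{\text{sing}}^\xi)$---is therefore contained in the four-dimensional hyperplane $\C \times \{0\} \times \C^3 \subseteq \C^5$, so the rank is at most $4$.

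For the matching lower bound, I plan to compute $\xi^2 = E_{31}$ and $\xi x + x\xi = E_{11}+E_{22}$ by direct matrix multiplication, and then exhibit four tangent vectors whose images under $(dF_\xi)_x$ are proportional to the four standard basis vectors of $\C \times \{0\} \times \C^3$. The natural candidates are $\xi \in \C\xi$, the central element $E_{11}+E_{22}-2E_{33} \in \mathfrak{z}(\lf_\alpha)$ from \eqref{Equation: Centre}, and the two basis vectors $e_\alpha$ and $E_{13}$ of $[\sln_3,x]$ from \eqref{Eq: xsl3}; routine trace computations---each using only that distinct matrix units are orthogonal under the trace pairing---will confirm that they do the job. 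This yields rank at least $4$, and combining the two bounds completes the proof.

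No step is a serious obstacle: once Corollary \ref{Cor: xsmoothsingxi} is in hand, the argument is essentially linear algebra, and the vanishing $x^2 = 0$---reflecting the fact that $x$ is a root vector for a non-zero root---gives the upper bound for free. The only mild subtlety is picking four tangent vectors that simultaneously diagonalize the four non-trivial trace functionals, but these are dictated by a quick inspection of which matrix-unit indices can appear in the products $xz$, $\xi z$, $\xi^2 z$, and $(\xi x + x\xi)z$.
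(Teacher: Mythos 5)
Your proposal is correct and follows essentially the same route as the paper: it rests on Corollary \ref{Cor: xsmoothsingxi} and the explicit formula \eqref{Eq: DFxi}, and the trace evaluations needed (e.g.\ $\xi^2=E_{31}$, $\xi x+x\xi=E_{11}+E_{22}$, and the images $(2,0,0,0,0)$, $(0,0,2,0,0)$, $(0,0,0,3,0)$, $(0,0,0,0,6)$ of $\xi$, $e_\alpha$, $E_{13}$, and the central element) all check out. The only difference is organizational: you obtain the rank by exhibiting four tangent vectors with independent images together with the $x^2=0$ upper bound, whereas the paper parametrizes a general tangent vector and computes the two-dimensional kernel of the restricted differential.
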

\begin{proof}
Corollary \ref{Cor: xsmoothsingxi} combines with \eqref{Equation: Centre} and \eqref{Eq: xsl3} to imply that 
\[T_x((\mathfrak{sl}_3)_{\text{sing}}^{\xi}) = \mathrm{span}\left\{\begin{pmatrix}
1 & 0 & 0 \\ 0 & -1 & 0 \\ 0 & 0 & 0
\end{pmatrix}, \begin{pmatrix}
0& 1 & 0 \\ 0 & 0 & 0 \\ 0 & 0 & 0
\end{pmatrix}, \begin{pmatrix}
0& 0& 1 \\ 0 & 0 & 0 \\ 0 & 0 & 0
\end{pmatrix} , \begin{pmatrix}
0& 0 & 0 \\ 0 & 0 & 0 \\ 0 & 1 & 0
\end{pmatrix},\begin{pmatrix}
1& 0 & 0 \\ 0 & 1 & 0 \\ 0 & 0 & -2
\end{pmatrix}, \begin{pmatrix}
0& 0 & 0 \\ 1 & 0 & 0 \\ 0 & 1 & 0
\end{pmatrix} \right\}\]
It follows that any $z\in T_x((\mathfrak{sl}_3)_{\text{sing}}^{\xi})$ has the form
\[
z= \begin{pmatrix}
z_1+z_2& z_3 & z_4 \\ z_5 & -z_1+z_2 & 0 \\ 0 & z_5+z_6 & -2z_2
\end{pmatrix}.
\]
for $z_1,\ldots,z_6\in\mathbb{C}$.
Taken together with \eqref{Eq: DFxi} and the identity $x^2 =0$, this yields
\[ 
(dF_\xi)_x(z) = (2z_5, 0, 2z_3, 3z_4, 6z_2).
\]
We conclude that 
\[
\ker ((dF_\xi)_x)\cap T_x((\mathfrak{sl}_3)_{\text{sing}}^{\xi}) = \left\{\begin{pmatrix}
z_1& 0 & 0 \\ 0 & -z_1 & 0 \\ 0 & z_6 & 0
\end{pmatrix}: z_1,z_6\in\C\right\}
\] is a two-dimensional subspace of the six-dimensional vector space $T_x((\mathfrak{sl}_3)_{\text{sing}}^{\xi})$. The restriction of $(dF_\xi)_x$ to $T_x((\mathfrak{sl}_3)_{\text{sing}}^{\xi})$ must therefore have rank four.
\end{proof} 

Now note that $b-1=4$ in our case of $\mathfrak{g}=\mathfrak{sl}_3$. Lemma \ref{Lemma: KeyLemma}, Corollary \ref{Cor: xsmoothsingxi}, and  Proposition \ref{Proposition: Qmain} therefore allow us to conclude that $\Sigma_\xi$ has codimension one in $\mathrm{Spec}(\mathcal{F}_{\xi})$. One also knows that any nilpotent element $a\in(\mathfrak{sl}_3)_{\text{reg}}$ is conjugate to $\xi$. It follows that $\Sigma_a$ has codimension one in $\mathrm{Spec}(\mathcal{F}_{a})$ if $a\in(\mathfrak{sl}_3)_{\text{reg}}$ is nilpotent. As discussed at the beginning of Section \ref{Subsection: Proof ii}, this completes the proof of Theorem \ref{Thm: Main Theorem}(ii).
 
\bibliographystyle{acm} 
\bibliography{Bif}

\begin{thebibliography}{10}

\bibitem{AbeCrooks}
{\sc Abe, H., and Crooks, P.}
\newblock Hessenberg varieties, {S}lodowy slices, and integrable systems.
\newblock {\em Math. Z. 291}, 3--4 (2019), 1093--1132.

\bibitem{BolsinovRemarks}
{\sc Bolsinov, A.}
\newblock Some remarks about {M}ishchenko--{F}omenko subalgebras.
\newblock {\em J. Algebra 483\/} (2017), 58--70.

\bibitem{Bolsinov91}
{\sc Bolsinov, A.~V.}
\newblock Commutative families of functions related to consistent {P}oisson
  brackets.
\newblock {\em Acta Appl. Math. 24}, 3 (1991), 253--274.

\bibitem{Bolsinov}
{\sc Bolsinov, A.~V.}
\newblock Compatible {P}oisson brackets on {L}ie algebras and the completeness
  of families of functions in involution.
\newblock {\em Izv. Akad. Nauk SSSR Ser. Mat. 55}, 1 (1991), 68--92.

\bibitem{BolsinovOshemkov}
{\sc Bolsinov, A.~V., and Oshemkov, A.~A.}
\newblock Bi-{H}amiltonian structures and singularities of integrable systems.
\newblock {\em Regul. Chaotic Dyn. 14}, 4-5 (2009), 431--454.

\bibitem{BorhoCommentarii}
{\sc Borho, W., and Kraft, H.}
\newblock \"{U}ber {B}ahnen und deren {D}eformationen bei linearen {A}ktionen
  reduktiver {G}ruppen.
\newblock {\em Comment. Math. Helv. 54}, 1 (1979), 61--104.

\bibitem{Brailov}
{\sc Brailov, Y.~A.}
\newblock Geometry of translations of invariants on semisimple {L}ie algebras.
\newblock {\em Mat. Sb. 194}, 11 (2003), 3--16.

\bibitem{BrailovFomenko}
{\sc Brailov, Y.~A., and Fomenko, A.~T.}
\newblock Lie groups and integrable {H}amiltonian systems.
\newblock In {\em Recent advances in {L}ie theory ({V}igo, 2000)}, vol.~25 of
  {\em Res. Exp. Math.} Heldermann, Lemgo, 2002, pp.~45--76.

\bibitem{BroerLectures}
{\sc Broer, A.}
\newblock Lectures on decomposition classes.
\newblock In {\em Representation theories and algebraic geometry ({M}ontreal,
  {PQ}, 1997)}, vol.~514 of {\em NATO Adv. Sci. Inst. Ser. C Math. Phys. Sci.}
  Kluwer Acad. Publ., Dordrecht, 1998, pp.~39--83.

\bibitem{CrooksRayan}
{\sc Crooks, P., and Rayan, S.}
\newblock Abstract integrable systems on hyperk\"ahler manifolds arising from
  {S}lodowy slices.
\newblock {\em Math. Res. Lett. 26}, 9 (2019), 9--33.

\bibitem{CrooksRoeserDocumenta}
{\sc Crooks, P., and R\"oser, M.}
\newblock On the fibres of {M}ishchenko--{F}omenko systems.
\newblock {\em Doc. Math. 25\/} (2020), 1195--1239.

\bibitem{IzosimovPhys}
{\sc Izosimov, A.}
\newblock Algebraic geometry and stability for integrable systems.
\newblock {\em Phys. D 291\/} (2015), 74--82.

\bibitem{Konyaev}
{\sc Konyaev, A.~Y.}
\newblock The bifurcation diagram and discriminant of integrable systems of
  rigid body type on the {L}ie algebras {${\rm so}(2n+1)$}, {${\rm sp}(2n)$},
  and {${\rm sl}(n)$}.
\newblock {\em Dokl. Akad. Nauk 421}, 1 (2008), 18--20.

\bibitem{Konyaev2}
{\sc Konyaev, A.~Y.}
\newblock The bifurcation diagram and discriminant of a spectral curve of
  integrable systems on {L}ie algebras.
\newblock {\em Mat. Sb. 201}, 9 (2010), 27--60.

\bibitem{KostantLie}
{\sc Kostant, B.}
\newblock Lie group representations on polynomial rings.
\newblock {\em Amer. J. Math. 85\/} (1963), 327--404.

\bibitem{Manakov}
{\sc Manakov, S.~V.}
\newblock A remark on the integration of the {E}ulerian equations of the
  dynamics of an {$n$}-dimensional rigid body.
\newblock {\em Funkcional. Anal. i Prilo\v zen. 10}, 4 (1976), 93--94.

\bibitem{Mishchenko}
{\sc Mishchenko, A.~S., and Fomenko, A.~T.}
\newblock Euler equations on finite-dimensional {L}ie groups.
\newblock {\em Izv. Akad. Nauk SSSR Ser. Mat. 42}, 2 (1978), 396--415, 471.

\bibitem{Molev}
{\sc Molev, A., and Yakimova, O.}
\newblock Quantisation and nilpotent limits of {M}ishchenko-{F}omenko
  subalgebras.
\newblock {\em Represent. Theory 23\/} (2019), 350--378.

\bibitem{Moreau}
{\sc Moreau, A.}
\newblock A remark on {M}ishchenko--{F}omenko algebras and regular sequences.
\newblock {\em Selecta Math. (N.S.) 24}, 3 (2018), 2651--2657.

\bibitem{Panyushev}
{\sc Panyushev, D.~I., and Yakimova, O.~S.}
\newblock The argument shift method and maximal commutative subalgebras of
  {P}oisson algebras.
\newblock {\em Math. Res. Lett. 15}, 2 (2008), 239--249.

\bibitem{Panyushev2}
{\sc Panyushev, D.~I., and Yakimova, O.~S.}
\newblock Poisson-commutative subalgebras and complete integrability on
  non-regular coadjoint orbits and flag varieties.
\newblock {\em Math. Z. 295}, 1-2 (2020), 101--127.

\bibitem{Popov}
{\sc Popov, V.~L.}
\newblock Irregular and singular loci of commuting varieties.
\newblock {\em Transform. Groups 13}, 3-4 (2008), 819--837.

\bibitem{Richardson}
{\sc Richardson, R.~W.}
\newblock Derivatives of invariant polynomials on a semisimple {L}ie algebra.
\newblock In {\em Miniconference on harmonic analysis and operator algebras
  ({C}anberra, 1987)}, vol.~15 of {\em Proc. Centre Math. Anal. Austral. Nat.
  Univ.} Austral. Nat. Univ., Canberra, 1987, pp.~228--241.

\bibitem{Tauvel}
{\sc Tauvel, P., and Yu, R. W.~T.}
\newblock {\em Lie algebras and algebraic groups}.
\newblock Springer Monographs in Mathematics. Springer-Verlag, Berlin, 2005.

\bibitem{Varadarajan}
{\sc Varadarajan, V.~S.}
\newblock On the ring of invariant polynomials on a semisimple {L}ie algebra.
\newblock {\em Amer. J. Math. 90\/} (1968), 308--317.

\bibitem{Vinberg}
{\sc Vinberg, E.~B.}
\newblock Some commutative subalgebras of a universal enveloping algebra.
\newblock {\em Izv. Akad. Nauk SSSR Ser. Mat. 54}, 1 (1990), 3--25, 221.

\end{thebibliography}
\end{document}